\newcommand{\R}{\mathbb{R}}
\newcommand{\N}{\mathbb{N}}
\newtheorem{theorem}{Theorem}
\newtheorem{lemma}{Lemma}
\newtheorem{corollary}[theorem]{Corollary}
\newtheorem{proposition}[lemma]{Proposition}
\newtheorem{definition}[lemma]{Definition}
\newtheorem{remark}[lemma]{Remark}
\newtheorem{example}{Example}
\newenvironment{proof}{{\it Proof.}~~}{\hfill$\square$}
\newcommand{\D}{\mathcal{D}}
\newcommand{\diam}[1]{\mathcal{D}(#1)}
\newcommand{\ctr}[2]{M_{#1#2}}
\begin{document}


\title{Exponential convergence of multiagent systems with lack of connection} 


\author{Fabio Ancona, Mohamed Bentaibi, Francesco Rossi\thanks{F. Ancona is with Dipartimento di Matematica ``Tullio Levi-Civita'', Universit\`a\ degli Studi di Padova, Via Trieste 63, 35121 Padova, Italy. M. Bentaibi is with d-fine s.r.l., Piazza Sigmund Freud, 1, 20124 Milano, Italia. F. Rossi is with Dipartimento di Culture del Progetto, Universit\`a\ Iuav di Venezia, 30135 Venezia, Italy. F. Ancona and F. Rossi are members of Gruppo Nazionale per l'Analisi Matematica, la Probabilit\`a e le loro Applicazioni (GNAMPA) of the Istituto Nazionale di Alta Matematica (INdAM).}}
\maketitle


\begin{abstract}                          
Finding conditions ensuring consensus, i.e. convergence to a common value, for a networked system is of crucial interest, both for theoretical reasons and applications. This goal is harder to achieve when connections between agents are temporarily lost.

Here, we prove that known conditions (introduced by Moreau) ensure an exponential convergence to consensus, with explicit rate of convergence. The key result is related to the length of the graph (i.e. the number of connections to reach a common agent): if this is large, then convergence is slow.

This general result also provides conditions for convergence of second-order cooperative systems with lack of connections.
 \end{abstract}


 \maketitle

\section{Introduction and main results}

In recent years, the study of multi-agent systems  has drawn a huge interest in the control community. The reasons behind the impressive rise of this research topic lie in the simplicity of definition, robustness of results, extent of  applications. For general books on this topic, see e.g. \cite{bullo2009distributed,nedich2015convergence,mesbahi2010graph}. 

Among multi-agent systems, the setting of cooperative systems plays a central role. Very generally, first-order cooperative systems (with binary interactions) are of the following form:
\begin{eqnarray}\label{e-ODE-noM}
    \dot{x}_i(t) = \frac{\lambda_i(x)}{N}\sum_{j=1}^{N} \phi_{ij}(x_i,x_j) (x_j(t) - x_i(t)).
\end{eqnarray}
This dynamics describes the evolution of $N \geq 2$ agents on an Euclidean space $\R^d$, where the position $x_i(t) \in \R^d$ may represent opinion on different topics, velocity or other attributes of agent $i$ at time $t$. The solution is then $x(t)=(x_1(t),\ldots,x_N(t))\in \R^{Nd}$. The term $\lambda_i$ is a positive scaling parameter, which role will be discussed in detail in Section \ref{s-models} below. The (nonlinear) influence functions 
\begin{equation}
\label{e-phi}
\phi_{ij}:=\phi_{ij}(x_i,x_j) :\R^d\times \R^{d}\to\R
\end{equation} are used to quantify the influence of agent $j$ on agent $i$, where $i,j\in\{1,\ldots,N\}$. We assume them to be positive, that corresponds to cooperativity of the system, i.e. to the fact that influence between agents tends to bring them closer. Models of this kind are now ubiquitous in networked systems and their applications. One of the most influential is certainly the bounded confidence model for opinion formation, first described (in discrete time) by Hegselmann and Krause in \cite{HK}, see also \cite{rateHK,rateHK2}. The natural goal for such kind of systems is to reach consensus: there exists a common $x^*$ such that $\lim_{t\to +\infty} x_i(t)=x^*$ for all agents $i \in \{1,\ldots,N\}$.

This idea of cooperation has also been extended to second-order systems, in which each agent is described by a pair $(x_i,v_i)$ of position-velocity variables. The dynamics is then given by 
\begin{eqnarray}\label{e-ODE-2nd-noM}
\begin{cases}
    \dot{x}_i(t) = v_i\\
\dot v_i=    \frac{\lambda_i(x)}{N}\sum_{j=1}^{N} \phi_{ij}(x_i,x_j) (v_j(t) - v_i(t))
\end{cases} 
\end{eqnarray}
for $ i,j \in\{1,\ldots,N\}$. As in \eqref{e-phi}, the interaction function $\phi_{ij}$ is non-negative and depends on the positions $x_i,x_j$, but here it is a multiplicative term for the velocity variable. The most famous example of this kind of dynamics was presented by Cucker and Smale in \cite{CS}. A first natural goal for this dynamics is alignment: the velocity variables converge to a common value $v^*$ (i.e. $\lim_{t\to +\infty} v_i(t)=v^*$ for all agents $i \in \{1,\ldots,N\})$. A second goal is the so-called flocking: one has both alignment and uniform boundedness of distances\footnote{i.e. there exists $K>0$ such that $|x_i(t)-x_j(t)|< K$ for all $t\geq 0$ and $i,j \in \{1,\ldots,N\}$}.\\

From the modelling point of view, each agent is expected to communicate with its neighbours through a \textit{network topology}, influenced by sensor characteristics and the environment. While the easiest scenario involves a fixed network topology (e.g. \cite{watts1998collective,olfati2007consensus}), practical situations often involve dynamic changes, due to factors like communication dropouts, security concerns, or intermittent actuation. In this setting, potential lack of connection between agents occurs; therefore, it becomes crucial to investigate whether consensus can still be achieved or not.  For first-order systems, we model this scenario as follows:
\begin{eqnarray}\label{e-ODE}
    \dot{x}_i(t) = \frac{\lambda_i(x)}{N}\sum_{j=1}^{N} M_{ij}(t)\phi_{ij}(x_i,x_j)(x_j(t) - x_i(t))
\end{eqnarray}
for $i \in \{1,\ldots,N\}$. The terms $M_{ij}:[0,+\infty)\mapsto [0,1]$ represent the weight given to the (directed) connection of agent $j$ with agent $i$. They encode the time-varying network topology and account for lack of connection (e.g., when they vanish). 
Given the connections $M_{ij}$, it is natural to build an associated directed graph $G(t)$ where nodes are the $N$ agents and the arrow $i\to j$ exists only when $M_{ij}$ is ``sufficiently strong''. While several different concepts have been proposed in the literature (see Section \ref{s-comparison} below), in this article we focus on the following integral condition.
 \begin{definition}[$(T,\mu)$-connectivity graph] \label{d-conngraph} Let $T,\mu>0$. For each $t\geq 0$, define the $(T,\mu)$-connectivity graph associated to the connection functions $M_{ij}$ as the following time-dependent oriented graph $G(t)$:
\begin{itemize}
\item the set of nodes is the set of indexes $\{1,\ldots,N\}$
\item the arrow $i\to j$ exists if it holds $$\frac1T \int_t^{t+T}M_{ij}(s)\,ds\geq \mu.$$
\end{itemize}
\end{definition}

The key result by Moreau \cite{moreau2004stability} can be stated as follows: if $G(t)$ is constant with respect to time and  admits a globally reachable node, then the system \eqref{e-ODE} exponentially converges to consensus for any initial condition, i.e. there exist $K,\delta>0$ such that the solution satisfies $$|x(t)|\leq K \exp(-\delta t) |x(0)|.$$ Yet, an explicit estimate of the parameters $K,\delta$ is not provided.

The main goal of this article is to estimate these parameters under the Moreau condition. The interest of this result is that it provides formal evidence to the following intuition: if all paths to the globally reachable node are ``short'', then exponential convergence is fast; otherwise, it may be slow for suitable choices of initial conditions and connection functions.

With this idea, we recall the concept of distance between nodes and length of the graph.
\begin{definition} Let $G$ be a graph with a globally reachable node $I$. Given an index $i\in \{1,\ldots,N\}$, we denote by $d(i,I)$ the (directed) distance from $i$ to $I$, that is the minimal length of a path from $i$ to $I$, i.e.
\begin{equation}
\label{e-distance}
d(i,I):= \min \Big\{ n\mbox{~s.t.~} i\to j_1\to \ldots j_n=I\mbox{~path of~}G\Big\}.
\end{equation}
We denote by $d(G,I)$ the length of the graph $G$ to $I$, i.e. 
\begin{equation}\label{e-length}
d(G,I):=\max_{i\in\{1,\ldots,N\}} d(i,I).
\end{equation}
\end{definition}
We can now provide the main statement, first for the linear case and then for the general one.

\begin{theorem}[Main result - 1st order linear systems] \label{t-main} Let the system \eqref{e-ODE} be given with $\lambda_i=\phi_{ij}\equiv 1$. Assume the following hypothesis:
\begin{description}
\item[(H1) \label{hyp:M}] All weights $M_{ij}:[0,+\infty)\to [0,1]$ are 
$\mathscr{L}^1$-measurable.
\end{description}

Fix $T,\mu>0$ and consider the $(T,\mu)$-connectivity graph $G(t)$ associated to the $M_{ij}(t)$. Assume that there exists a constant graph $G^*$ such that all its arrows are contained in $G(kT)$ for all $k\in\N$. Assume moreover that $G^*$ admits a globally reachable node $I$ and has length $d^*:=d(G^*,I)$. Then, for all $n\in\mathbb{N}$, for $\tau=d^*T$, it  holds
\begin{eqnarray}
\label{diam-est-ntau}
\max_{i,j}|x_i(n\tau)-x_j(n\tau)|\leq C^n \max_{i,j}|x_i(0)-x_j(0)|,\label{e-stimadiamThm}
\end{eqnarray}
with 
\begin{equation}
C=1-\frac12\left( \tfrac{\mu T}{N+\mu T}    \right)^{d^*}\exp(-2d^*T). \label{e-C}
\end{equation}
Moreover, for $\tau=d^*T$, for all $t\in[0,\tau]$ and $n\in\mathbb{N}$ it holds
\begin{eqnarray}
&&\max_{i,j}|x_i(n\tau+t)-x_j(n\tau+t)|\leq \phi(t) C^n\max_{i,j}|x_i(0)-x_j(0)|\label{e-stimaexp}
\end{eqnarray}
with
\begin{eqnarray*}
\hspace{-6mm}&&\phi(t):= \begin{cases}
1&~\mbox{for $t\in[0,\tau-\delta]$}\\
C\exp\left(2\frac{N-1}N(\tau-t)\right)&~\mbox{for $t\in[\tau-\delta,\tau]$}
\end{cases}
\end{eqnarray*}
where $\delta:=-\frac{N}{2(N-1)}\log(C)$.
\end{theorem}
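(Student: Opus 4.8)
The plan is to reduce the Euclidean‑diameter estimate to a one‑dimensional estimate along each direction, and then to propagate ``smallness'' from the globally reachable node $I$ outward along the reversed arrows of $G^*$, gaining one graph‑hop per time window of length $T$.

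\emph{Step 1 (reduction and monotonicity).} Fix a unit vector $e$ and set $z_i(t):=\langle x_i(t),e\rangle$; since $\lambda_i=\phi_{ij}\equiv1$, \eqref{e-ODE} gives $\dot z_i=\frac1N\sum_j M_{ij}(t)(z_j-z_i)$. A supporting‑hyperplane argument (at an index realizing $\max_i z_i$ the right‑hand side is $\le0$, and symmetrically at an index realizing $\min_i z_i$) shows $t\mapsto\max_i z_i(t)$ is nonincreasing and $t\mapsto\min_i z_i(t)$ is nondecreasing; hence $\mathcal{D}_e(t):=\max_i z_i(t)-\min_i z_i(t)$ is nonincreasing, and, taking the supremum over $|e|=1$, so is $\mathcal{D}(t):=\max_{i,j}|x_i(t)-x_j(t)|$. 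In particular each $z_i(t)$ stays in an interval of length $h:=\mathcal{D}_e(0)\le\mathcal{D}(0)$; after translating we may assume $0\le z_i(t)\le h$ for all $i,t$, and, replacing $e$ by $-e$ if necessary, that $z_I(0)\le h/2$.

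\emph{Step 2 (one‑hop propagation and induction on $d(\cdot,I)$).} The core is the following claim, proved by induction on $k=0,1,\dots,d^*$: every agent $i$ with $d(i,I)\le k$ satisfies $z_i(t)\le h(1-\eta_k)$ for all $t\in[kT,(k+1)T]$, where $\eta_0$ is of order $\tfrac12$ and $\eta_{k+1}\ge\eta_k\cdot\tfrac{\mu T}{N+\mu T}\,e^{-2T}$. The base case is agent $I$ alone: from $z_l\le h$ one gets $\dot z_I\le h-z_I$, hence $z_I(t)\le h-\tfrac h2e^{-t}$. For the inductive step, pick $i'$ with $d(i',I)=k+1$ and an arrow $i'\to i$ of $G^*$ lying on a shortest path, so $d(i,I)\le k$; since the arrows of $G^*$ are contained in $G(kT)$, Definition~\ref{d-conngraph} gives $\int_{kT}^{(k+1)T}M_{i'i}(s)\,ds\ge\mu T$. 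Writing $u_{i'}:=h-z_{i'}\ge0$ and using $z_l\le h$ for $l\neq i$ together with $z_i\le h(1-\eta_k)$ on the window, \eqref{e-ODE} yields the scalar differential inequality $\dot u_{i'}\ge\tfrac1N M_{i'i}(t)\,h\eta_k-\bigl(\tfrac1N\sum_l M_{i'l}(t)\bigr)u_{i'}\ge\tfrac1N M_{i'i}(t)\,h\eta_k-u_{i'}$; integrating it over $[kT,(k+1)T]$ against the integrating factor $e^{s}$ and using $\int M_{i'i}\ge\mu T$ gives a lower bound $u_{i'}((k+1)T)\gtrsim h\eta_k\tfrac{\mu T}{N}e^{-T}$, which then persists on the following window because $\dot u_{i'}\ge-u_{i'}$ everywhere — yielding $z_{i'}(t)\le h(1-\eta_{k+1})$ on $[(k+1)T,(k+2)T]$ with $\eta_{k+1}$ a multiple of $\eta_k$ of the stated form. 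The first hop ($k=0$) is handled directly from the exponential bound $z_I(t)\le h-\tfrac h2e^{-t}$ rather than from a uniform bound, and this is precisely what makes the leading constant $\tfrac12$ and the exponent $2d^*T$ (rather than something larger) come out. Iterating $d^*$ times, at $\tau=d^*T$ every agent satisfies $z_i(\tau)\le h(1-\eta_{d^*})$ with $\eta_{d^*}\ge\tfrac12\bigl(\tfrac{\mu T}{N+\mu T}\bigr)^{d^*}e^{-2d^*T}$, hence $\mathcal{D}_e(\tau)\le(1-\eta_{d^*})h\le C\,\mathcal{D}_e(0)$ with $C$ as in \eqref{e-C}; the supremum over $e$ gives $\mathcal{D}(\tau)\le C\,\mathcal{D}(0)$, i.e. \eqref{e-stimadiamThm} for $n=1$.

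\emph{Step 3 (iteration and the profile $\phi$).} Since the arrows of $G^*$ lie in $G(kT)$ for \emph{every} $k$, Step~2 applies verbatim on each window $[n\tau,(n+1)\tau]$, so $\mathcal{D}((n+1)\tau)\le C\,\mathcal{D}(n\tau)$, and \eqref{e-stimadiamThm} follows by induction on $n$. For \eqref{e-stimaexp} I combine monotonicity of $\mathcal{D}$ (giving $\mathcal{D}(n\tau+t)\le C^n\mathcal{D}(0)$) with a one‑sided lower bound on its decay rate: along a pair $(i^*,j^*)$ realizing the diameter, Cauchy--Schwarz, $M_{ij}\le1$ and the fact that there are at most $N-1$ nonzero terms give $\tfrac{d}{dt}\mathcal{D}^2\ge-4\tfrac{N-1}{N}\mathcal{D}^2$, hence $\mathcal{D}(s)\le\exp\!\bigl(2\tfrac{N-1}{N}(s'-s)\bigr)\mathcal{D}(s')$ for $s\le s'$; applied with $s=n\tau+t$ and $s'=(n+1)\tau$ this gives $\mathcal{D}(n\tau+t)\le\exp\!\bigl(2\tfrac{N-1}{N}(\tau-t)\bigr)C^{n+1}\mathcal{D}(0)$. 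Taking the smaller of the two upper bounds produces exactly $\phi(t)C^n\mathcal{D}(0)$, the crossover $t=\tau-\delta$ being the value where $C\exp\!\bigl(2\tfrac{N-1}{N}\delta\bigr)=1$, i.e. $\delta=-\tfrac{N}{2(N-1)}\log C$.

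\emph{Main obstacle.} Steps~1 and~3 are routine comparison / maximum‑principle computations. The real work is Step~2: keeping the auxiliary bound $z_i\le h(1-\eta_k)$ available on the \emph{whole} window that the next hop integrates over (not merely at its endpoints), and bookkeeping the per‑hop losses — the $e^{-T}$ from the integrating factor, the $e^{-T}$ from carrying the bound into the next window, and the $\tfrac{\mu T}{N+\mu T}$ from the activation integral (after normalizing the total weight) — together with the special treatment of the first hop, so that the accumulated constant is exactly the one in \eqref{e-C} and not something weaker.
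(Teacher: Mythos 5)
Your proposal is correct and follows essentially the same route as the paper: projection onto a direction to reduce to the scalar linear system, the midpoint/reflection trick that produces the factor $\tfrac12$, hop-by-hop propagation of the gap from the globally reachable node along reversed arrows with a per-hop loss of $\tfrac{\mu T}{N+\mu T}e^{-2T}$ (the paper's Propositions on the one-step and $k$-step estimates), and the backward-in-time bound $\frac{d}{dt}\mathcal D\ge-2\tfrac{N-1}{N}\mathcal D$ for the interpolating profile $\phi$. The only difference is that you execute the one-hop lemma by a direct Gr\"onwall inequality on $u=h-z$ (using the inductive window bound and $u_l\ge 0$), whereas the paper uses explicit exponential barrier functions and a three-case contradiction argument; your version is an equivalent, slightly more streamlined implementation of the same step.
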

\begin{corollary} \label{c-main} Let the system \eqref{e-ODE} be given. Assume that both (H1) in Theorem \ref{t-main} and the following hold:
\begin{description}
\item[(H2) \label{hyp:Lip}] Both the weight functions $$\lambda_i(x):\R^{Nd}\to \R^+$$ and the binary interaction functions $$\phi_{ij}(x_i,x_j):\R^d\times\R^d\to \R^+$$ are Lipschitz continuous, strictly positive.
\end{description}

Define
\begin{eqnarray}\underline{m}&:=&\min\left\{\lambda_i(x) \phi_{ij}(x_i,x_j)\mbox{~~s.t.~} |x_k| \leq  \max_i|x_i(0)|\mbox{ for all }k\in\{1,\ldots,N\}\right\},\label{e-um}\\
\overline{m}&:=&\max\left\{\lambda_i(x) \phi_{ij}(x_i,x_j)\mbox{~~s.t.~} |x_k| \leq  \max_i|x_i(0)|\mbox{ for all }k\in\{1,\ldots,N\}\right\},\label{e-om}
 \end{eqnarray}
 
 Define the $(T,\mu)$-connectivity graph and assume that there exits a constant graph $G^*$ with length
 $d^*$ as in Theorem \ref{t-main}. Then, estimates \eqref{e-stimadiamThm}-\eqref{e-stimaexp} hold, with $\tau=d^*T/\overline{m}$, 
$$C=1-\frac12\left( \tfrac{\underline{m}\mu T}{N+\underline{m}\mu T}    \right)^{d^*}\exp(-2d^*T\overline{m}),\qquad
\phi(t):= \begin{cases}
1&~\mbox{for $t\in[0,\tau-\delta]$}\\
C\exp\left(2\frac{N-1}N(\tau-t)\right)&~\mbox{for $t\in[\tau-\delta,\tau]$}
\end{cases}$$
where $\delta:=-\frac{N}{2(N-1) \,\overline{m}}\log(C)$.
\end{corollary}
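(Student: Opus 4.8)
The plan is to deduce Corollary \ref{c-main} from Theorem \ref{t-main} by a rescaling-and-truncation argument, treating the nonlinear, inhomogeneous system \eqref{e-ODE} as a linear one with time-varying weights on a bounded invariant region. First I would observe that, since all influence terms are strictly positive (cooperativity), the maximum and minimum of the coordinates $x_i(t)$ over $i$ are, respectively, nonincreasing and nondecreasing in $t$; hence the convex hull of $\{x_i(0)\}$ is invariant, and in particular $|x_k(t)|\le \max_i|x_i(0)|$ for all $k$ and all $t\ge 0$. This is what makes the definitions \eqref{e-um}--\eqref{e-om} meaningful: along the actual trajectory, the products $\lambda_i(x)\phi_{ij}(x_i,x_j)$ all lie in the compact interval $[\underline m,\overline m]\subset(0,+\infty)$, because by (H2) these are continuous strictly positive functions evaluated on a compact set. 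I would state this invariance as a short preliminary lemma (or cite the analogous step used in the proof of Theorem \ref{t-main}).

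The core step is then a change of variables that turns \eqref{e-ODE} into an instance of the linear system covered by Theorem \ref{t-main}. Define new connection weights $\widetilde M_{ij}(t):=\dfrac{\lambda_i(x(t))\,\phi_{ij}(x_i(t),x_j(t))}{\overline m}\,M_{ij}(t)$, which take values in $[0,1]$ by the bound above, and are $\mathscr L^1$-measurable since they are products of the measurable $M_{ij}$ with a continuous function of the (absolutely continuous, hence measurable) trajectory. Next I rescale time by $\overline m$: setting $y_i(s):=x_i(s/\overline m)$, the chain rule gives $\dot y_i(s)=\frac1{N}\sum_j \widetilde M_{ij}(s/\overline m)\,(y_j(s)-y_i(s))$, which is exactly \eqref{e-ODE} with $\lambda_i=\phi_{ij}\equiv 1$ and the measurable weights $s\mapsto \widetilde M_{ij}(s/\overline m)$. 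So Theorem \ref{t-main} applies to $y$, provided we identify the right $(T',\mu')$-connectivity graph.

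The remaining work is bookkeeping on the connectivity condition under the two transformations. The time rescaling sends the window length $T$ to $T':=\overline m T$ (since $\int$ over $[s,s+T']$ in the $s$-variable corresponds to $\int$ over an interval of length $T'/\overline m=T$ in the original time, up to the factor $\overline m$ from $ds$). For the threshold, on any arrow $i\to j$ of $G^*$ we have $\frac1T\int_t^{t+T}M_{ij}\ge\mu$ at the relevant times $t=kT$, and since $\widetilde M_{ij}\ge \frac{\underline m}{\overline m}M_{ij}$, the rescaled weights satisfy the $(T',\mu')$-connectivity condition with $\mu':=\frac{\underline m}{\overline m}\mu$ — wait, more carefully: the averaged lower bound for $s\mapsto\widetilde M_{ij}(s/\overline m)$ over a window of length $T'=\overline m T$ equals $\frac1{\overline m T}\int \widetilde M_{ij}(s/\overline m)\,ds = \frac1T\int \widetilde M_{ij}(u)\,du \ge \frac{\underline m}{\overline m}\cdot\mu$, and the window endpoints $kT'$ in the $s$-variable correspond to the endpoints $kT$ in the $u$-variable, so $G^*$ is still contained in the rescaled connectivity graph at all multiples of $T'$. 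Hence Theorem \ref{t-main} gives \eqref{e-stimadiamThm}--\eqref{e-stimaexp} for $y$ with horizon $\tau'=d^*T'=d^*\overline m T$, rate $C$ built from $T'$ and $\mu'$, and function $\phi$ built from $T'$ and the same $N$; plugging $T'=\overline m T$ and $\mu'=\underline m\mu/\overline m$ into \eqref{e-C} yields $C=1-\frac12\bigl(\tfrac{\underline m\mu T}{N+\underline m\mu T}\bigr)^{d^*}\exp(-2d^*T\overline m)$, and dividing the $s$-time back by $\overline m$ (so $t=s/\overline m$, $\tau=\tau'/\overline m=d^*T/\overline m$, $\delta\mapsto\delta/\overline m$) gives precisely the stated constants for $x$. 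The main obstacle is not any single estimate but getting all three normalizations — weights into $[0,1]$, time rescaling by $\overline m$, threshold rescaling by $\underline m/\overline m$ — to compose consistently so that the window endpoints still land on integer multiples of the new period; I would double-check that the ``$\le N+\underline m\mu T$'' in the denominator of $C$ is not instead ``$\le N+\mu T'$'' — indeed $N+\mu' T'=N+\frac{\underline m}{\overline m}\mu\cdot\overline m T=N+\underline m\mu T$, consistent with the statement. Finally I would note the measurability of $\widetilde M_{ij}$ more carefully (it only needs (H1) plus (H2) plus continuity of $t\mapsto x(t)$, which holds since solutions exist and are $C^1$ by the Lipschitz assumption), and that existence/uniqueness of solutions on $[0,+\infty)$ follows from (H2) together with the invariance of the bounded region.
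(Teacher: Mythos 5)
Your argument is essentially the paper's own proof: the authors likewise absorb $\lambda_i\phi_{ij}$ into time-varying linear weights (their Proposition \ref{p:non-lin}, resting on contractivity of the support from Proposition \ref{p-contractive}), rescale time by $\overline{m}$ to normalize the weights into $[0,1]$ (their Proposition \ref{p-multid}), track the connectivity parameters to $(\overline{m}T,\underline{m}\mu/\overline{m})$, and then invoke Theorem \ref{t-main}, arriving at the same constant $C$. One small arithmetic point: from your own (correct) $\tau'=d^*\overline{m}T$, undoing the rescaling gives $\tau=\tau'/\overline{m}=d^*T$ rather than $d^*T/\overline{m}$ — a discrepancy that traces to the corollary's statement itself rather than to your construction.
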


One of the by-products of our main result is that it directly provides a generalization of the Moreau condition to the case of time-varying graphs $G(t)$. Moreover, estimates are provided for other conditions that have been proposed in the literature, such as the Persistent Excitation \cite{narendra2012stable,ChSi2010,ChSi2014,ren2008distributed,tang2020bearing,manfredi2016criterion,bonnet2021consensus,anderson2016convergence,chaillet2008uniform}
 and the Integral Scrambling Coefficient conditions \cite{bonnet2023consensus,bonnet2022consensus}. See more details in Section \ref{s-consequences} below.

 A very interesting generalization of our results about the system \eqref{e-ODE} is given by second-order system, that we write in the following form from now on:
 \begin{eqnarray}\label{e-ODE-2nd}
\begin{cases}
    \dot{x}_i(t) = v_i\\
\dot v_i=    \frac{1}{N}\sum_{j=1}^{N} M_{ij}(t)\phi(|x_i-x_j|) (v_j(t) - v_i(t)).
\end{cases}
\end{eqnarray}
Here, we have fixed the coefficients $\lambda_i$  constantly equal to 1 and  the interaction functions $\phi_{ij}$ do not depend on the agents and their value is function of the distance only. Several generalizations are possible with the same techniques described here, only with more complicated statements.

For these systems, we present here our second main result of this article 
\begin{theorem}[Main result - 2nd order systems]\label{t-main2} Let the system \eqref{e-ODE-2nd} be given. Assume that both (H1) in Theorem \ref{t-main} and the following hold:
\begin{description}
\item[(H2') \label{hyp:Lip2}] The binary interaction function $$\phi(r):[0,+\infty)\to \R^+$$ is Lipschitz continuous, strictly positive, nonincreasing.
\end{description}

Define the $(T,\mu)$-connectivity graph and assume that there exits a constant graph $G^*$ with length $d^*$ as in Theorem \ref{t-main}.  Define the diameters in the position and velocity variables, respectively:
\begin{eqnarray}
&&\D_X(t):=\max_{ij}|x_i(t)-x_j(t)|,\\
&&\D_V(t):=\max_{ij}|v_i(t)-v_j(t)|.
\end{eqnarray}
Then, the following estimate holds
\begin{eqnarray}
\D_V((n+1)\tau)&\leq&C(n\tau)\D_V(n\tau),\label{e-stimaV}
\end{eqnarray}
 where $\tau= d^*T/\phi(0)$ and 
\begin{eqnarray*}
&&C(n\tau):= 1-\frac12\left( \tfrac{\phi(\D_X(0)+(n+1)\tau \D_V(0))\mu T}{N+\phi(\D_X(0)+(n+1)\tau \D_V(0))\mu T}    \right)^{d^*}
\exp(-2\tfrac{N-1}{N}d^*T\phi(0)).
\end{eqnarray*}

It also holds:
\begin{itemize}
\item if $\phi(r)^{d^*}$ is nonintegrable at $+\infty$, i.e. $\int_R^{+\infty}\phi(r)^{d^*}\,dr=+\infty$ for some $R>0$, then $\lim_{t\to+\infty} \D_V(t)=0$, i.e. alignment occurs;
\item if it moreover holds $$
\lim_{n\to+\infty} n  \phi(n\tau \D_V(0))^{d^*}>\exp(2\tfrac{N-1}{N}d^*T\phi(0))\left(\tfrac{N}{\mu T}\right)^{d^*},
$$
e.g. when $\lim_{r\to+\infty}r\phi(r)^{d^*}=+\infty$, then flocking occurs, i.e. alignment occurs and $\D_X(t)$ is uniformly bounded in time.
\end{itemize}
\end{theorem}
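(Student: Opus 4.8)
The plan is to exploit the fact that the velocity variables $v=(v_1,\dots,v_N)$ of \eqref{e-ODE-2nd} solve, by themselves, a first‑order cooperative system of the form \eqref{e-ODE} with $\lambda_i\equiv 1$ and binary interaction $\phi_{ij}(x_i,x_j)=\phi(|x_i-x_j|)$; the positions re‑enter only through these coefficients, and over any bounded time interval they —hence the coefficients— can be controlled a priori. Concretely I would proceed in four steps: (i) show that $\D_V$ is nonincreasing; (ii) on each window $J_n:=[n\tau,(n+1)\tau]$ produce an a priori upper bound on the pairwise distances $|x_i(t)-x_j(t)|$, and therefore a lower bound $\phi(R_n)$ and the universal upper bound $\phi(0)$ on the effective weights $M_{ij}(t)\phi(|x_i(t)-x_j(t)|)$; (iii) apply to the $v$‑equation on $J_n$ the one‑step contraction estimate underlying Theorem \ref{t-main}/Corollary \ref{c-main} — in the form in which $\underline m,\overline m$ are bounds on the effective interaction along the trajectory over that window — to obtain exactly \eqref{e-stimaV}; (iv) iterate \eqref{e-stimaV} and read off alignment and flocking from the resulting scalar recursion.

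For step (i): for every unit vector $e\in\R^d$ the scalars $\langle e,v_i(t)\rangle$ satisfy a scalar cooperative ODE with nonnegative coefficients, so $\max_i\langle e,v_i(t)\rangle-\min_i\langle e,v_i(t)\rangle$ is nonincreasing (the maximal component cannot increase, the minimal one cannot decrease); taking the supremum over $|e|=1$ gives that $\D_V(t)$ is nonincreasing, the velocity analogue of the standard monotonicity of the diameter for first‑order cooperative dynamics. For step (ii), the case $\D_V(0)=0$ is trivial (all velocities stay equal, so alignment holds and $\D_X$ is constant, so flocking holds), so assume $\D_V(0)>0$. Integrating $\dot x_i=v_i$ gives $\D_X(t)\le\D_X(0)+\int_0^t\D_V(s)\,ds\le\D_X(0)+t\,\D_V(0)$ by monotonicity of $\D_V$, and likewise, for $t\in J_n$, $|x_i(t)-x_j(t)|\le\D_X(n\tau)+\tau\,\D_V(n\tau)\le\D_X(0)+(n+1)\tau\,\D_V(0)=:R_n$. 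Since $\phi$ is positive and nonincreasing, $\phi(R_n)\le\phi(|x_i(t)-x_j(t)|)\le\phi(0)$ for all $i,j$ and all $t\in J_n$, so on $J_n$ the $v$‑dynamics is a first‑order system fitting Corollary \ref{c-main} with $\underline m=\phi(R_n)$ and $\overline m=\phi(0)$; the window length required there, $d^*T/\overline m=d^*T/\phi(0)$, equals our $\tau$, and the one‑step estimate \eqref{e-stimadiamThm} specializes to \eqref{e-stimaV} with precisely the stated $C(n\tau)$ (with the same $\tfrac{N-1}{N}$‑refinement of the exponential rate that already appears in the function $\phi$ of Theorem \ref{t-main}).

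Iterating \eqref{e-stimaV} yields $\D_V(n\tau)\le\big(\prod_{k=0}^{n-1}C(k\tau)\big)\D_V(0)$, with each factor in $[\tfrac12,1)$. For alignment it suffices that $\prod_{k=0}^{n-1}C(k\tau)\to 0$, i.e. (using $\log C\le C-1$) that $\sum_k\big(1-C(k\tau)\big)=+\infty$. Because $\phi(R_k)\le\phi(0)$ and $x\mapsto x/(N+x)$ is increasing with a linear lower bound on $[0,\phi(0)]$, one has $1-C(k\tau)\ge c\,\phi(R_k)^{d^*}$ for a fixed $c>0$; since $R_k=\D_X(0)+(k+1)\tau\D_V(0)$ is a linearly growing sequence of step $\tau\D_V(0)$ while $\phi^{d^*}$ is nonincreasing, the usual series/integral comparison gives $\sum_k\phi(R_k)^{d^*}=+\infty$ if and only if $\int^{+\infty}\phi(r)^{d^*}\,dr=+\infty$. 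Thus nonintegrability of $\phi^{d^*}$ forces $\D_V(n\tau)\to0$, and, $\D_V$ being nonincreasing, $\D_V(t)\to 0$, i.e. alignment. For flocking one needs $\D_X$ uniformly bounded; by step (ii), $\D_X(t)\le\D_X(0)+\int_0^{+\infty}\D_V(s)\,ds\le\D_X(0)+\tau\sum_{n\ge 0}\D_V(n\tau)$, so it suffices that $\sum_n\prod_{k<n}C(k\tau)<+\infty$. Writing $1-C(k\tau)$ in terms of $k\,\phi(R_k)^{d^*}$ and using $R_k\sim k\tau\,\D_V(0)$, the hypothesis $\lim_n n\,\phi(n\tau\,\D_V(0))^{d^*}>\exp\!\big(2\tfrac{N-1}{N}d^*T\phi(0)\big)\big(\tfrac{N}{\mu T}\big)^{d^*}$ is exactly what makes $1-C(k\tau)$ exceed a large‑enough multiple of $1/k$, so that the partial products $\prod_{k<n}C(k\tau)$ decay like $n^{-\alpha}$ with $\alpha>1$ and hence are summable; combined with alignment this gives flocking. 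The displayed example $\lim_{r\to+\infty}r\,\phi(r)^{d^*}=+\infty$ is a fortiori sufficient, since then $n\,\phi(R_n)^{d^*}\to+\infty$.

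The reduction to the first‑order analysis and the within‑window estimate are routine once the windowed form of Theorem \ref{t-main}/Corollary \ref{c-main} is in hand; I expect the genuine obstacle to be the flocking bound. One must convert the contraction factors $C(k\tau)$ —whose gap from $1$ is comparable to $\phi(R_k)^{d^*}$ with $R_k$ growing linearly— into summability of the telescoping product $\sum_n\prod_{k<n}C(k\tau)$, which requires tracking the constants $\mu T/N$ and $\exp(2\tfrac{N-1}{N}d^*T\phi(0))$ precisely enough to hit the stated threshold rather than a larger one, and handling cleanly the feedback between the linear growth of $R_k$ (which enters $C(k\tau)$) and the decay of $\D_V$ — most plausibly through a continuity/bootstrap argument on $[0,+\infty)$ showing that $\D_X$ cannot escape the bound $\D_X(0)+\tau\sum_n\D_V(n\tau)$.
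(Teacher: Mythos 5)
Your proposal is correct and follows essentially the same route as the paper: establish that $\D_V$ is nonincreasing and that $\D_X(t_0+t)\leq\D_X(t_0)+t\,\D_V(t_0)$, then apply the windowed contraction of Corollary \ref{c-main} with $\overline m=\phi(0)$ and $\underline m=\phi(\D_X(0)+(n+1)\tau\D_V(0))$ to get \eqref{e-stimaV}, prove alignment via $\log C\leq C-1$ plus a series--integral comparison, and prove flocking by summability of the partial products $\prod_k C(k\tau)$ (the paper phrases your ``$n^{-\alpha}$ with $\alpha>1$'' step as Raabe's test). The bootstrap you anticipate as the main obstacle is not actually needed, since $C(k\tau)$ is defined through the a priori bound $\D_X(0)+(k+1)\tau\D_V(0)$ rather than the true positions.
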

\begin{remark}[Comparison with Cucker-Smale model] In the original Cucker-Smale model, the interaction function is $\phi(r)=\frac{1}{(1+r^2)^\beta}$. For full connection
(i.e. when every node is connected with every other note, and thus $d^*=1$), one has that alignment occurs for $\beta\leq \frac12$, while flocking occurs for $\beta<1/2$. In our case, one has that alignment occurs when $\beta\leq\frac1{2d^*}$, while flocking occurs for $\beta<\frac1{2d^*}$. Also in this case, we see that the length $d^*$ of the $(T,\mu)$-connectivity graph plays a central role in the long-time behavior of the system. We will provide examples of this phenomenon in Section \ref{s-examples}.
\end{remark}

The structure of the article is as follows: in Section \ref{s-cooperative} we collect main properties of cooperative systems, i.e. of the form \eqref{e-ODE}, and provide some key examples of models of opinion formation. In Section \ref{s-main} we state the main theorem and prove it. In Section \ref{s-consequences} we provide some consequences and generalizations, i.e the case of Persistent Excitation, Integral Scrambling Coefficients, and Theorem \ref{t-main2} about second-order systems. In  Section \ref{s-examples} we show with some examples the relevance of our results, the fact that they are sharp, and compare them with other conditions of convergence. We draw some conclusions in Section \ref{s-conclusions}.

\section{Cooperative systems}
\label{s-cooperative}

In this section, we provide main results about cooperative systems. After recalling well-posedness of the solution, we  prove results about contractivity of the support. We then prove that we can restrict ourselves to the study of one-dimensional linear systems. We finally discuss several different models of cooperative systems available in the literature.

\subsection{Well-posedness and contractivity of the support} \label{s-contractive}

In this article, we study cooperative first-order systems of the form \eqref{e-ODE} and second-order systems of the form \eqref{e-ODE-2nd}. We always assume conditions (H1)-(H2) stated in Corollary~\ref{c-main} for systems of the form \eqref{e-ODE},
and conditions (H1)-(H2') for systems of the form \eqref{e-ODE-2nd}. These hypotheses ensure existence, globally in time, and uniqueness for the solution to the associated Cauchy problem, see e.g.~\cite{filippov}. Solutions are considered in the Carathéodory sense for the rest of the article: trajectories are absolutely continuous functions and  \eqref{e-ODE} or  \eqref{e-ODE-2nd} holds for almost every time. 
 General results about cooperative systems can also be found in~\cite{smith}.
 
We now provide a key property of cooperative systems: the support of the solution is (weakly) contractive.\begin{proposition} \label{p-contractive} Let $x(t)$ be a solution of~\eqref{e-ODE}. Define the support of the solution at time $t$ as
\begin{equation}\label{e-support}
\mathrm{supp}(x(t)):=\mathrm{conv}(\{x_i(t)\}),
\end{equation} i.e.~the (closed) convex hull of the set of $x_i$ at time $t$. Then, for $0\leq t\leq s$ it holds $\mathrm{supp}(x(t))\supseteq \mathrm{supp}(x(s))$. 

In dimension $d=1$, this implies that the maximum function $x_+(t):=\max_j\{x_j(t)\}$ is non-increasing and the minimum function $x_-(t):=\min_j\{x_j(t)\}$ is non-decreasing.

Define the diameter of the solution as the diameter of the support, i.e.
$$\diam{t}:=\max_{i,j}|x_i(t)-x_j(t)|.$$
The previous result implies that the diameter is a nonincreasing function.
\end{proposition}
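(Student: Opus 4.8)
The plan is to reduce the $d$-dimensional statement to a one-dimensional monotonicity fact by testing against arbitrary directions, and then to prove that fact by an envelope/Danskin-type argument applied to the maximum of absolutely continuous functions.

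First I would rewrite \eqref{e-ODE} as $\dot x_i = \sum_{j=1}^N a_{ij}(t)\,(x_j(t)-x_i(t))$ with $a_{ij}(t):=\frac{\lambda_i(x(t))}{N}\,M_{ij}(t)\,\phi_{ij}(x_i(t),x_j(t))\ge 0$, which uses only $\lambda_i,\phi_{ij}>0$ and $M_{ij}\ge 0$. Fixing any $p\in\R^d$ and setting $y_i(t):=\langle p,x_i(t)\rangle$, the vector $y=(y_1,\dots,y_N)$ solves the scalar system $\dot y_i=\sum_j a_{ij}(t)(y_j-y_i)$, of exactly the same structure as \eqref{e-ODE} in dimension one. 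Since $\mathrm{supp}(x(t))=\mathrm{conv}(\{x_i(t)\})$ has support function $h(p,t)=\max_i\langle p,x_i(t)\rangle$, and inclusion of closed convex sets is equivalent to the pointwise inequality of their support functions, it suffices to prove the one-dimensional claim: $x_+(t):=\max_j x_j(t)$ is non-increasing (and, applying this to $(-x_i)$, $x_-(t):=\min_j x_j(t)$ is non-decreasing).

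For the one-dimensional claim I would argue as follows. Each $x_j$ is absolutely continuous, hence so is $x_+=\max_j x_j$, which is therefore differentiable off a null set $N_0$; enlarge $N_0$ so that off it every $x_j$ is also differentiable and the ODE holds (all this off a common null set, within the Carathéodory framework). At any $t_0\notin N_0$ pick $k$ with $x_k(t_0)=x_+(t_0)$; then $g:=x_+-x_k\ge 0$ everywhere and $g(t_0)=0$, so $t_0$ is an interior minimum of the differentiable function $g$, whence $\dot x_+(t_0)=\dot x_k(t_0)$. Finally $\dot x_k(t_0)=\sum_j a_{kj}(t_0)\,(x_j(t_0)-x_k(t_0))\le 0$, since each term is $\le 0$ by maximality of $x_k(t_0)$. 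Integrating $\dot x_+\le 0$ over $[t,s]$ gives $x_+(s)\le x_+(t)$.

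The diameter statement is then immediate: $\diam{t}=\max_{i,j}|x_i(t)-x_j(t)|$ is the diameter of the polytope $\mathrm{supp}(x(t))$ (the maximum of the convex map $z\mapsto|z-w|$ over a polytope is attained at a vertex, i.e.\ at some $x_i$), and a nested decreasing family of convex sets has non-increasing diameters — equivalently, $\diam{t}=\max_{|p|=1}\big(h(p,t)+h(-p,t)\big)$ is a maximum of non-increasing functions. The only genuinely delicate point is the envelope step $\dot x_+(t_0)=\dot x_k(t_0)$ together with the bookkeeping that places all the almost-everywhere properties (the ODE, differentiability of the $x_j$, differentiability of $x_+$) outside a single null set; the rest is routine.
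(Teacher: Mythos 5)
Your proof is correct, and it takes a genuinely different route from the paper's. The paper argues directly in $\R^d$ with an invariance-type geometric argument: at a time of differentiability, if $x_i(t)$ lies on the boundary of the polytope $\mathrm{conv}(\{x_j(t)\})$, each summand $\frac{\lambda_i}{N}M_{ij}\phi_{ij}(x_j-x_i)$ points inward (or is tangent) because $x_j$ lies in the polytope and the coefficients are nonnegative; hence $\dot x_i$ points inward and $x_i(t+h)$ stays in the polytope for small $h>0$, giving nestedness, with the $d=1$ and diameter statements read off as corollaries. You instead dualize: inclusion of closed convex sets is equivalent to the pointwise ordering of support functions, the projection $y_i=\langle p,x_i\rangle$ solves a scalar system of the same cooperative form, and the scalar maximum is handled by the absolute-continuity-plus-envelope argument ($\dot x_+(t_0)=\dot x_{k}(t_0)\le 0$ a.e., then integrate). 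The two proofs buy different things. The paper's is shorter and more visual, but the step from ``$\dot x_i(t)$ points inward at one differentiability time'' to ``$x_i(t+h)\in\mathrm{supp}(x(t))$ for small $h$'' is really a Nagumo-type invariance assertion that deserves more care in the Carathéodory setting (the derivative may be tangent, and the inclusion must then be propagated from a.e.\ local statements to all $0\le t\le s$). Your version avoids this entirely: you only need an a.e.\ sign condition on the derivative of an absolutely continuous function, which integrates cleanly to the global monotonicity $x_+(s)\le x_+(t)$, and the support-function reduction is the same projection device the paper itself uses later (Proposition on reduction to one dimension). The one point to keep explicit, which you do flag, is that all the a.e.\ properties (differentiability of each $x_j$, of $x_+$, and validity of the ODE) must be collected outside a single null set before invoking the interior-minimum identity.
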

\begin{proof} First observe that $\mathrm{supp}(x(t))$ is the convex hull of a finite number of points, hence it is a closed polygon.

Let $t$ be a time in which $x(t)$ is differentiable. If $x_i(t)$ belongs to the interior of $\mathrm{supp}(x(t))$, by continuity it belongs to the interior of $\mathrm{supp}(x(t+h))$ for $h>0$ sufficiently small. Assume then that $x_i(t)$ belongs to the boundary of $\mathrm{supp}(x(t))$: each term 
$$\frac{\lambda_i(x)}N M_{ij}(t)\phi_{ij}(x_i,x_j)(x_j-x_i)$$
points inwards in the polygon, eventually being tangent: this is due to the fact that $x_j$ belongs to the polygon and $\lambda_i,M_{ij},\phi_{ij}$ are positive. Then, the sum of all terms, that is $\dot x_i$, points inwards. Thus, one has $x_i(t+h)\in \mathrm{supp}(x(t))$ for $h>0$ sufficiently small. By merging the two cases, one has $x_i(t+h)\in \mathrm{supp}(x(t))$ for all $i=1,\ldots,N$, hence by convexity $\mathrm{supp}(x(t+h))\subseteq \mathrm{supp}(x(t))$. This proves the first result.

The results in dimension $d=1$ directly follows, since $\mathrm{supp}(x(t))$ is an interval. The statement about the diameter is a direct consequence too. \end{proof}

We strengthen the previous statement in dimension $d=1$, as follows, in the case of constant extremal values.

\begin{lemma}\label{L:belowmaximumpreserved}
Consider a trajectory $x(t)$ of~\eqref{e-ODE} in $\R$ such that $x_{+}^{*}=\max\{x_{i}(t)\ :\ i=1,\dots,N\}$ is constant.
Then the set $I^+(t)$ of indices $i$ that realize this maximum is non-increasing in time: if $i\notin I^+(t)$ then $i\notin I^+(t+h)$ for all $h>0$.

Similarly, assume that $x_{-}^{*}=\min\{x_{i}(t)\ :\ i=1,\dots,N\}$ is constant.
Then the set $I^-(t)$ of indices $i$ that realize this minimum is non-increasing in time.
\end{lemma}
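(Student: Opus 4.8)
The plan is to reduce the statement to the elementary fact that a nonnegative, absolutely continuous function $g$ obeying a linear differential inequality $\dot g(s)\ge -L(s)\,g(s)$ with locally bounded $L$ cannot vanish at a later time once it is strictly positive. I will carry this out for the minimum (the maximum being symmetric): fix $t$ and an index $i\notin I^-(t)$, and set $g(s):=x_i(s)-x_{-}^{*}\ge 0$ for $s\ge t$, so that $g(t)>0$.

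First I would rewrite the dynamics \eqref{e-ODE} in the shifted variables $z_j(s):=x_j(s)-x_{-}^{*}$. Since $x_{-}^{*}$ is constant, every $z_j$ is nonnegative, and since $x_j-x_i=z_j-z_i$ we get, for a.e.\ $s\ge t$,
\[
\dot g(s)=\frac{\lambda_i(x(s))}{N}\sum_{j=1}^{N}M_{ij}(s)\,\phi_{ij}(x_i(s),x_j(s))\,\big(z_j(s)-g(s)\big)\ \ge\ -\Big(\frac{\lambda_i(x(s))}{N}\sum_{j=1}^{N}M_{ij}(s)\,\phi_{ij}(x_i(s),x_j(s))\Big)g(s),
\]
where I have simply discarded the nonnegative contributions $M_{ij}\phi_{ij}\,z_j\ge 0$. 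By Proposition~\ref{p-contractive} the trajectory stays in $\{|y|\le \max_i|x_i(0)|\}$ (in dimension one its support is an interval contained in this ball), so hypothesis (H2) bounds the coefficient in parentheses by the constant $\overline{m}$ of Corollary~\ref{c-main} (indeed it is $\le \tfrac1N\sum_j \overline{m}=\overline{m}$, using $M_{ij}\le 1$). Hence $\dot g(s)\ge -\overline{m}\,g(s)$ for a.e.\ $s$; local boundedness of this coefficient on each compact time interval, which follows from continuity alone, would already suffice.

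Next I would integrate: the map $s\mapsto e^{\overline{m}s}g(s)$ is absolutely continuous on bounded intervals with a.e.\ derivative $e^{\overline{m}s}(\dot g+\overline{m}g)\ge 0$, hence nondecreasing, so $g(t+h)\ge e^{-\overline{m}h}g(t)>0$ for every $h>0$. Therefore $x_i(t+h)>x_{-}^{*}$, i.e.\ $i\notin I^-(t+h)$, which is exactly the claimed monotonicity of $I^-$. For the maximum, the identical computation applies to $w_i(s):=x_{+}^{*}-x_i(s)\ge 0$, which satisfies $\dot w_i(s)\ge -\overline{m}\,w_i(s)$ for the same reason; alternatively it follows from the minimum case applied to the trajectory $-x(\cdot)$, which again solves a system of the form \eqref{e-ODE} with Lipschitz, strictly positive coefficients.

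The only point requiring any care — the ``obstacle'', such as it is — is measure-theoretic: since trajectories are merely absolutely continuous and \eqref{e-ODE} holds only a.e., one cannot argue pointwise at a hypothetical first instant where $x_i$ touches the extremum. The Grönwall-type monotonicity argument above sidesteps this, and the only ingredient it needs beyond \eqref{e-ODE} itself is a local bound on $\tfrac{\lambda_i}{N}\sum_j M_{ij}\phi_{ij}$, furnished by Proposition~\ref{p-contractive} together with (H2).
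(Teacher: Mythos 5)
Your proposal is correct and follows essentially the same route as the paper: both define the nonnegative gap to the constant extremum, discard the favorable terms to obtain a linear differential inequality $\dot g\ge -\psi\,g$, and conclude positivity by a Gr\"onwall/integrating-factor argument. The only cosmetic difference is that you bound the coefficient uniformly by $\overline{m}$, while the paper keeps the time-dependent $\psi(t)=\frac{\lambda_i(x(t))}{N}\sum_j M_{ij}(t)\phi_{ij}$ and uses $\exp\left(-\int\psi\right)>0$ directly.
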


\begin{proof} 
Consider an index $i\notin I^+(T)$ for some $T\geq 0$, which means $x_i(T)<x^*_+$. Define $f(t):=x^*_+-x_i(t)$, that satisfies $f(T)>0$. Let $t$ be a point in which $x(t)$ is differentiable. By the dynamic~\eqref{e-ODE} it holds
\begin{align*}
\dot f(t)&=0-\frac{\lambda_i(x)}N \sum_{j=1}^{N} M_{ij}(t)\phi_{ij}(x_i,x_j)(x_j-x_i)\geq -\frac{\lambda_i(x)}N \sum_{j=1}^{N} M_{ij}(t)\phi_{ij}(x_i,x_j)\left(x^*_+-x_i\right)\\
&=-\psi(t)f(t),
\end{align*}
where $\psi(t):=\frac{\lambda_i(x(t))}N \sum_{j=1}^{N} M_{ij}(t)\phi_{ij}(x_i(t),x_j(t))\geq 0$. In the first inequality we used that $x_{j}\leq x^{*}_{+}$, for all $j=1\,,\dots\,,N$, since $x^{*}_{+}$ is a maximum.
Gronwall lemma now ensures $$f(t)\geq f(T)\cdot\exp\left(-\int_T^t \psi(s)\,ds\right)>0.$$
By continuity, the estimate holds for all $t\geq T$, ensuring that $i\not \in I^+(t)$ for all $t\geq T$.

The proof for the second statement is equivalent.
\end{proof}

\subsection{Reduction to 1-dimensional linear systems}\label{s-reduction}
 In our article, we study convergence to consensus for~\eqref{e-ODE} by considering all possible connection functions $M_{ij}(t)$. As a consequence, it is not restrictive to assume that the dynamics is linear. Indeed, we have the following simple results.

 \begin{proposition}\label{p:non-lin} 
 Consider a solution $x(t)$ to \eqref{e-ODE} satisfying hypotheses (H1)-(H2) in Corollary \ref{c-main} and define the associated $(T,\mu)$-connectivity graph $G(t)$. Define $\overline{m}$ as in \eqref{e-om}. Then, there exist $\mathscr{L}^1$-integrable functions $\widetilde{\ctr{i}{j}}:[0,+\infty)\to[0,\overline{m}]$ such that $x(t)$ solves the linear system
 \begin{equation}
\label{e-ODElin}
	\dot x_{i}=\frac{1}{N}\sum_{j=1}^{N} \widetilde{M_{ij}}(t)\left(x_{j}-x_{i}\right),\qquad \ i=1,\dots,N\,.
 \end{equation}
Moreover, the $(T,\underline{m} \mu)$-connectivity graph associated to \eqref{e-ODElin} $\tilde G(t)$, with $\underline{m}$ given by \eqref{e-um}, contains $G(t)$.

Then, convergence to consensus of all trajectories of \eqref{e-ODE} is a consequence of the same result for \eqref{e-ODElin}.
\end{proposition}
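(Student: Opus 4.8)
The plan is to define the candidate linear coefficients directly from the given solution: for the fixed trajectory $x(t)$ of \eqref{e-ODE}, set
$$\widetilde{M_{ij}}(t):=\lambda_i(x(t))\,\phi_{ij}(x_i(t),x_j(t))\,M_{ij}(t).$$
First I would check that this makes $x(t)$ a solution of \eqref{e-ODElin}: this is immediate by substitution, since plugging $\widetilde{M_{ij}}(t)$ into the right-hand side of \eqref{e-ODElin} reproduces exactly the right-hand side of \eqref{e-ODE} evaluated along $x(t)$. Next I would verify the stated regularity and bounds. Measurability of $\widetilde{M_{ij}}$ follows because $M_{ij}$ is $\mathscr L^1$-measurable by (H1), while $t\mapsto\lambda_i(x(t))$ and $t\mapsto\phi_{ij}(x_i(t),x_j(t))$ are continuous (compositions of Lipschitz functions with the absolutely continuous trajectory $x(t)$), hence the product is measurable; local integrability is clear since the continuous factors are locally bounded. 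For the uniform bound $\widetilde{M_{ij}}(t)\in[0,\overline m]$, I would invoke Proposition \ref{p-contractive}: the support of $x(t)$ is contractive, so $|x_k(t)|\le\max_i|x_i(0)|$ for all $k$ and all $t\ge 0$; therefore $\lambda_i(x(t))\phi_{ij}(x_i(t),x_j(t))$ lies in $[\underline m,\overline m]$ by the definitions \eqref{e-um}–\eqref{e-om}, and since $M_{ij}(t)\in[0,1]$ the product lies in $[0,\overline m]$.

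Then I would compare the two connectivity graphs. Fix an arrow $i\to j$ of $G(t)$, i.e. $\frac1T\int_t^{t+T}M_{ij}(s)\,ds\ge\mu$. Using the lower bound $\lambda_i(x(s))\phi_{ij}(x_i(s),x_j(s))\ge\underline m$ valid for all $s\ge 0$ (again from the contractivity of the support), I get
$$\frac1T\int_t^{t+T}\widetilde{M_{ij}}(s)\,ds=\frac1T\int_t^{t+T}\lambda_i(x(s))\phi_{ij}(x_i(s),x_j(s))M_{ij}(s)\,ds\ge\underline m\,\frac1T\int_t^{t+T}M_{ij}(s)\,ds\ge\underline m\,\mu,$$
so the arrow $i\to j$ belongs to the $(T,\underline m\mu)$-connectivity graph $\widetilde G(t)$ of \eqref{e-ODElin}. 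Hence $\widetilde G(t)$ contains $G(t)$ for every $t$. The last assertion is then immediate: any hypothesis on $G(t)$ (in particular the existence of a constant subgraph $G^*$ with a globally reachable node, as used in Theorem \ref{t-main}) is inherited by $\widetilde G(t)$ since it has more arrows; so convergence to consensus of the linear system \eqref{e-ODElin}, which has the same trajectory $x(t)$, yields convergence to consensus of the original trajectory of \eqref{e-ODE}.

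I do not expect a serious obstacle here; the statement is essentially a bookkeeping argument. The one point requiring care — and the only place the hypotheses (H1)–(H2) and the earlier results are genuinely used — is the justification that the coefficients $\widetilde{M_{ij}}$ are globally bounded and bounded below along the trajectory: this rests on the contractivity of the support (Proposition \ref{p-contractive}), which confines $x(t)$ to a compact set on which $\lambda_i$ and $\phi_{ij}$ are bounded away from $0$ and $\infty$. Note also that the construction is trajectory-dependent — the same $M_{ij}(t)$ give different $\widetilde{M_{ij}}(t)$ for different initial data — but this is harmless because, as stated in Section \ref{s-reduction}, we are free to study convergence for each initial condition separately.
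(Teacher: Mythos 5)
Your proposal is correct and follows essentially the same route as the paper: the same definition $\widetilde{M_{ij}}=\lambda_i\phi_{ij}M_{ij}$, the same appeal to the contractivity of the support (Proposition \ref{p-contractive}) to bound $\lambda_i\phi_{ij}$ in $[\underline m,\overline m]$, and the same lower-bound computation on $\frac1T\int_t^{t+T}\widetilde{M_{ij}}$ to show $\tilde G(t)\supseteq G(t)$. The only difference is that you spell out the measurability of the coefficients, which the paper leaves implicit.
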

\begin{proof}
 Consider a trajectory $x(t)$ of~\eqref{e-ODE} and assume that $t$ is a time for which $x$ is differentiable. It holds $$\dot x_i= \frac{1}{N}\sum_{j=1}^{N} \widetilde{M_{ij}}(t)\left(x_{j}-x_{i}\right),$$
 by choosing $$\widetilde{M_{ij}}(t):=\lambda_i(x) M_{ij}(t)\phi_{ij}(x_i,x_j)\,.
 $$
 The new coefficients $ \widetilde{M_{ij}}$ are integrable. Indeed, Proposition \ref{p-contractive} ensures $|x(t)|\leq C_1:=|x(0)|$, hence $$\lambda_i(x)\phi_{ij}(x_j(t),x_i(t))\leq \overline{m}.$$
 It then holds 
 $$  \widetilde{M_{ij}}(t)\leq \overline{m} {M_{ij}}(t).$$
 By observing that the definition of $\underline{m}$ in \eqref{e-um} implies 
 $$\lambda_i(x)\phi_{ij}(x_j(t),x_i(t))\geq \underline{m},$$
 it also holds $$\frac1T \int_t^{T+t} \tilde M_{ij}(s)\, ds\geq \frac1T \int_t^{T+t} \underline{m} M_{ij}(s)\, ds\geq \underline{m} \mu.$$
 This proves the result about $\tilde G(t)$ and  $G(t)$. 
 \end{proof}

Thanks to this simple result, from now on we will only consider the linear dynamics given in~\eqref{e-ODElin}. 
We even restrict ourselves to 1-dimensional systems. Moreover, we choose the maximum value of the connection functions $M_{ij}$ as $C=1$. This is the meaning of the following result.

\begin{proposition} \label{p-multid}
 Let $d\in\mathbb{N}$ and $v\in \R^d$. Consider a trajectory $$x(t)=(x_1(t),\ldots,x_N(t))$$ of the system \eqref{e-ODElin}, starting from a fixed initial condition $(x_1(0),\ldots, x_N(0))$ and with connection functions $\widetilde{M_{ij}}(t)$. Denote with $\tilde G(t)$ the corresponding $(T/\overline{m},\mu)$-connectivity graph, where $\overline{m}$ is given by \eqref{e-om}. Define the projected and time-rescaled trajectory $$y(t;v)=(y_1(t),\ldots,y_N(t))$$ with $y_i(t)\in \R$ defined by $y_i(t):=x_i(t/\overline{m})\cdot v$. Then, $y(t;v)$ is the unique solution to 
 \begin{eqnarray}\label{e-ODE1}
 \dot y_i=\frac{1}{N}\sum_{j=1}^N \hat{M}_{ij}(t) (y_j(t)-y_i(t)),
 \end{eqnarray}  with projected initial data $y_i(0):=x_i(0)\cdot v$ and the connection functions $\hat M_{ij}(t):=\frac{1}{\overline{m}}\widetilde{M_{ij}}(t/\overline{m})$. Then, the associated $(T,\mu/\overline{m})$-connectivity graph $\hat G(t)$ contains $\tilde G(t/\overline{m})$.
\end{proposition}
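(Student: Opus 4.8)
The plan is to verify \eqref{e-ODE1} by a direct change of variables, both in the state (linear projection onto $v$) and in time (rescaling by $\overline{m}$), and then to track how the $(T,\mu)$-connectivity graph transforms under these operations. First I would check that $y_i(t):=x_i(t/\overline{m})\cdot v$ satisfies the claimed ODE: differentiating, $\dot y_i(t)=\tfrac1{\overline{m}}\dot x_i(t/\overline{m})\cdot v$, and plugging in \eqref{e-ODElin} gives $\dot y_i(t)=\tfrac1{\overline{m}}\cdot\tfrac1N\sum_j \widetilde{M_{ij}}(t/\overline{m})\,(x_j(t/\overline{m})-x_i(t/\overline{m}))\cdot v$. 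Since the inner product is linear, $(x_j(t/\overline{m})-x_i(t/\overline{m}))\cdot v=y_j(t)-y_i(t)$, so defining $\hat M_{ij}(t):=\tfrac1{\overline{m}}\widetilde{M_{ij}}(t/\overline{m})$ yields exactly \eqref{e-ODE1}. Measurability of $\hat M_{ij}$ follows from measurability of $\widetilde{M_{ij}}$ (composition with an affine map and multiplication by a constant), and $\hat M_{ij}(t)\in[0,1]$ because $\widetilde{M_{ij}}$ takes values in $[0,\overline{m}]$ by Proposition~\ref{p:non-lin}. The initial data match trivially, and uniqueness is inherited from the well-posedness of \eqref{e-ODElin} (equivalently, \eqref{e-ODE1} is a linear ODE with $L^1_{\mathrm{loc}}$ coefficients, so Carathéodory uniqueness applies).

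Next I would handle the graph inclusion. The $(T,\mu/\overline{m})$-connectivity graph $\hat G(t)$ has the arrow $i\to j$ whenever $\tfrac1T\int_t^{t+T}\hat M_{ij}(s)\,ds\geq \mu/\overline{m}$. Substituting $\hat M_{ij}(s)=\tfrac1{\overline{m}}\widetilde{M_{ij}}(s/\overline{m})$ and changing variables $\sigma=s/\overline{m}$ (so $ds=\overline{m}\,d\sigma$), the left side becomes $\tfrac1T\int_{t/\overline{m}}^{(t+T)/\overline{m}}\tfrac1{\overline{m}}\widetilde{M_{ij}}(\sigma)\,\overline{m}\,d\sigma=\tfrac1T\int_{t/\overline{m}}^{t/\overline{m}+T/\overline{m}}\widetilde{M_{ij}}(\sigma)\,d\sigma$. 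Multiplying through, the arrow condition is equivalent to $\tfrac1{T/\overline{m}}\int_{t/\overline{m}}^{t/\overline{m}+T/\overline{m}}\widetilde{M_{ij}}(\sigma)\,d\sigma\geq \mu$, which is precisely the defining condition for the arrow $i\to j$ in the $(T/\overline{m},\mu)$-connectivity graph $\tilde G$ evaluated at time $t/\overline{m}$. Hence $\hat G(t)$ and $\tilde G(t/\overline{m})$ have exactly the same arrows, so in particular $\hat G(t)\supseteq \tilde G(t/\overline{m})$, as claimed.

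There is really no serious obstacle here: the statement is a bookkeeping lemma whose only content is that projection onto a direction $v$ commutes with the linear dynamics and that the integral connectivity condition is invariant (up to the obvious parameter rescaling) under affine reparametrization of time. The one point requiring minor care is the consistency of all the rescaled parameters: one must check that the time rescaling by $\overline{m}$ simultaneously sends $T/\overline{m}\mapsto T$ and $\mu\mapsto \mu/\overline{m}$ in a compatible way, which is exactly what the change-of-variables computation above confirms. I would also remark that $v$ plays no role beyond linearity, so the argument is uniform in $v$; combined with the fact that consensus of $x(t)$ is equivalent to $y(t;v)\to\text{const}$ for all $v$ in a spanning set (e.g. the standard basis of $\R^d$), this reduces the study of \eqref{e-ODElin} in $\R^d$ to that of \eqref{e-ODE1} in $\R$ with $M_{ij}\in[0,1]$ — which is the setting of Theorem~\ref{t-main}.
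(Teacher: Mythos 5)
Your proof is correct and follows essentially the same route as the paper: differentiate the identity $y_i(t)=x_i(t/\overline{m})\cdot v$ using linearity of the inner product, and verify the graph condition by the change of variables $\sigma=s/\overline{m}$ in the connectivity integral. The additional observations (measurability, range of $\hat M_{ij}$, and the equality rather than mere inclusion of the graphs) are accurate but not needed beyond what the paper records.
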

\begin{proof} We prove the first statement. Let $x(t)$ be a trajectory. At times $t$ for which $x$ is differentiable, by differentiating the identity $y_i(t)=x_i(t/\overline{m})\cdot v$, we have
\begin{eqnarray}
\dot y_i(t) &= &\frac{1}{N}\sum_{j=1}^N\frac{\widetilde{M_{ij}}(t/\overline{m})}{\overline{m}}(x_j(t/\overline{m})-x_i(t/\overline{m}))\cdot v=\frac{1}{N}\sum_{j=1}^N \hat M_{ij}(t)(y_j(t)-y_i(t)).\label{e:connectionproj}
\end{eqnarray}

We now prove the result about $\hat G(t)$. It holds
\begin{eqnarray*}
&&\frac{1}{T} \int_t^{T+t} \hat M_{ij}(s)\,ds=\frac{1}{T\overline{m}} \int_t^{T+t} \widetilde M_{ij}(s/\overline{m})\,ds=\frac{1}{T} \int_{t/\overline{m}}^{(T+t)/\overline{m}} \widetilde M_{ij}(s')\,ds'=\\
&&\frac{1/\overline{m}}{T/\overline{m}} \int_{t/\overline{m}}^{t/\overline{m}+T/\overline{m}} \widetilde M_{ij}(s')\,ds'\geq \mu/\overline{m}.
\end{eqnarray*}
\end{proof}

Thanks to these propositions, from now on we will mainly prove our results for the first-order 
$1$-dimensional linear system~\eqref{e-ODE1}.

\subsection{Models of opinion formation}
\label{s-models}
In this section, we discuss some models of opinion formation, of the form \eqref{e-ODE-noM}. Indeed, self-organization and emergence of patterns derived from collective dynamics have been widely studied in the literature, and many models have been proposed in order to account for more complex behavior or to catch some specific properties. In particular, key differences that we want to highlight are encoded in the choice of the scaling parameters $\lambda_i$ in \eqref{e-ODE-noM}.

In the classical case for \eqref{e-ODE-noM}, the function $\phi(x_i,x_j)$ is symmetric and $\lambda_i > 0$ is fixed for all $i \in \{1,\ldots,N\}$, see \cite{rainer2002opinion,degroot1974reaching}. A key property that facilitates the analysis of this model is the symmetry of the influence function with respect to $i$ and $j$. For general symmetric influence functions, the mean value is preserved. The system is cooperative, and if the influence function remains uniformly bounded below by a strictly positive constant over time, the dynamics converge to the initial mean state. This symmetry also enables the use of spectral methods and $l^2$-based techniques to study the system’s variance. Anyway, in the case of lack of connection that we address in this article, symmetry is broken if we do not require the additional constraint $M_{ij}=M_{ji}$.

A limitation of the classical model with constant weightings is that each agent’s dynamics is influenced by the collective weight of all agents. Consequently, under such scaling, the internal interactions of a small group located far from a much larger cluster become negligible. To address this, and originally intended for second-order models, in \cite{motsch2011new} a more realistic formulation has been proposed with a modified weighting procedure:
\begin{align*}
    \lambda_i = \frac{N}{\sum_{l=1}^N \phi(x_i,x_l)}.
\end{align*}
In this setting, the influence of agent $j$ on agent $i$ is normalized by the total influence $\sum_{l=1}^N \phi(x_i,x_l)$ acting on agent $i$. As a drawback, even when basic interactions $\phi(x_i,x_l)$ are symmetric, the resulting model is not symmetric anymore.

We highlight that our results in this article cover all choices of weights $\lambda_i$, that are somehow included in the definitions of $\underline{m},\overline{m}$ given in \eqref{e-um}-\eqref{e-om}.

\section{Proof of main result}
\label{s-main}

In this section, we prove the main result of this article, i.e. Theorem \ref{t-main}. As already stated above, we first  consider the first-order 1-dimensional linear case
\begin{equation}
\label{e-ODEmain}
\dot x_i(t)=\frac{1}{N}\sum_{j=1}^N M_{ij}(t) (x_j-x_i)\mbox{~~with~~}M_{ij}:[0,+\infty)\mapsto [0,1],
 \end{equation}
 that covers all the generalizations, thanks to Proposition \ref{p-multid}. Indeed, when we rewrite~\eqref{e-ODE} as a linear system, as in Proposition \ref{p:non-lin}, the $(T,\mu)$-connectivity graph $G(t)$ is transformed into the $(T,\underline{m}\mu)$-connectivity graph $\tilde G(t)$. On the other hand, applying Proposition \ref{p-multid} the graph $\tilde G(t)$ is transformed into the $(T\overline{m},\underline{m}\mu/\overline{m})$-connectivity graph $\hat G(t\overline{m})$ for a system of the form~\eqref{e-ODEmain}.
 
 The proof is based on the following steps: in Section \ref{s-barrier}, we introduce some barrier functions that allow to provide estimates about solutions of \eqref{e-ODEmain}. In Section \ref{s-graph}, we provide estimates about the solutions based on the connectivity graph. In Section \ref{s-proof}, we finally give the proof of Theorem \ref{t-main} and Corollary \ref{c-main} in full generality.
 
 \subsection{Barrier functions}
 \label{s-barrier}
 
  
 In this section, we provide some estimates about solutions of \eqref{e-ODEmain}, based on some exponential functions.

\begin{proposition}[Left barrier]\label{p-barrier} Define $$\Psi_L(\bar x,\alpha,t):=\alpha+\exp\left(-\tfrac{N-1}N t\right)(\bar x-\alpha).$$
Let $x(t)$ be a solution of \eqref{e-ODEmain} with $x_i(0)\geq \alpha$ for all $i\in\{1,\ldots,N\}$,
and let $\bar x \geq\alpha$. If there exists an index $I$ and a time $\tau\in[0,T]$ such that $x_I(\tau)\geq \Psi_L(\bar x,\alpha,\tau)$, then for all $t\geq \tau$ it also holds $x_I(t)\geq \Psi_L(\bar x,\alpha,t)$.
\end{proposition}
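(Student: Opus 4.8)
The plan is to show that the function $g(t) := x_I(t) - \Psi_L(\bar x, \alpha, t)$ stays nonnegative for $t \geq \tau$ once $g(\tau) \geq 0$, using a differential-inequality / barrier argument. First I would record the key structural facts: since $x_i(0) \geq \alpha$ for all $i$ and $\alpha$ is a constant, it sits below the minimum, so by Proposition~\ref{p-contractive} (monotonicity of the support in $d=1$) we have $x_j(t) \geq \alpha$ for all $j$ and all $t \geq 0$; and by the same proposition $x_j(t) \leq x_+(0)$, though what I really need is only the lower bound $x_j(t) \geq \alpha$. Next I would compute $\dot\Psi_L$: a direct differentiation gives $\dot\Psi_L(\bar x,\alpha,t) = -\tfrac{N-1}{N}\,(\Psi_L(\bar x,\alpha,t) - \alpha)$, so $\Psi_L$ solves the scalar linear ODE $\dot y = -\tfrac{N-1}{N}(y-\alpha)$. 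The point of choosing this particular exponential is that it is exactly the worst-case lower envelope for a single coordinate of \eqref{e-ODEmain}.

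The heart of the argument is a one-sided estimate on $\dot x_I$ at a.e.\ time. From \eqref{e-ODEmain},
\[
\dot x_I = \frac1N \sum_{j=1}^N M_{Ij}(t)(x_j - x_I) \geq \frac1N \sum_{j \ne I} M_{Ij}(t)(\alpha - x_I),
\]
where I used $x_j \geq \alpha$ and dropped the (zero) $j=I$ term, noting each summand with $x_j \geq \alpha > $ possibly $x_I$ could be negative, so I bound $x_j - x_I \geq \alpha - x_I$ when $x_I \geq \alpha$ (and the inequality is trivially fine otherwise since $M_{Ij} \le 1$). Since $M_{Ij}(t) \in [0,1]$, the coefficient $\frac1N\sum_{j\ne I} M_{Ij}(t)$ lies in $[0, \tfrac{N-1}{N}]$; calling it $c(t)$ and writing $a := \alpha$, we get $\dot x_I \geq -c(t)(x_I - \alpha) \geq -\tfrac{N-1}{N}(x_I - \alpha)$ whenever $x_I \geq \alpha$ (and since we will maintain $x_I \geq \Psi_L \geq \alpha$, this regime is the relevant one). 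Therefore $g(t) = x_I(t) - \Psi_L(t)$ satisfies, at a.e.\ $t$ where $x_I(t) \geq \alpha$,
\[
\dot g(t) \;\geq\; -\tfrac{N-1}{N}(x_I(t)-\alpha) + \tfrac{N-1}{N}(\Psi_L(t)-\alpha) \;=\; -\tfrac{N-1}{N}\,g(t).
\]
A Grönwall / comparison argument (exactly as in the proof of Lemma~\ref{L:belowmaximumpreserved}) then gives $g(t) \geq g(\tau)\exp(-\tfrac{N-1}{N}(t-\tau)) \geq 0$ for all $t \geq \tau$, which is the claim. To make the case distinction clean, I would argue by contradiction: if $g$ became negative somewhere, let $t_1 > \tau$ be the last time before that at which $g(t_1)=0$; on $(t_1, \cdot)$ we have $g < 0$, i.e.\ $x_I < \Psi_L$; but $\Psi_L \geq \alpha$, so at such times either $x_I \geq \alpha$ (and the differential inequality above applies, forcing $g$ to not decrease below $0$ — contradiction via Grönwall on that subinterval) or $x_I < \alpha$, which is impossible since $x_I(t) \geq \alpha$ always.

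The main obstacle is the bookkeeping around the sign of $x_j - x_I$: the clean bound $\dot x_I \geq -\tfrac{N-1}{N}(x_I-\alpha)$ is only immediate when $x_I \geq \alpha$, so I need to be careful that the region where the comparison is run genuinely stays in $\{x_I \geq \alpha\}$ — which is guaranteed precisely because the barrier $\Psi_L$ itself never drops below $\alpha$, so as long as $x_I \geq \Psi_L$ we are safe, and the only way to lose $x_I \geq \Psi_L$ is to first touch it, at which point the differential inequality kicks back in. The rest (differentiating $\Psi_L$, invoking contractivity for the global lower bound $x_j \geq \alpha$, and the final Grönwall step) is routine and parallels Lemma~\ref{L:belowmaximumpreserved} almost verbatim.
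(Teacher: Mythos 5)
Your proof is correct and follows essentially the same route as the paper: both work with the difference $f(t)=x_I(t)-\Psi_L(\bar x,\alpha,t)$, derive a differential inequality of the form $\dot f\ge -c(t)f$ with $c(t)=\frac1N\sum_{j\ne I}M_{Ij}(t)\le\frac{N-1}{N}$ using $x_j\ge\alpha$ and $M_{Ij}\in[0,1]$, and conclude by Gr\"onwall. The only cosmetic differences are that the paper splits the sum according to whether $x_j\le\Psi_L$ or not (your uniform bound $x_j\ge\alpha$ already suffices), and your closing contradiction argument is redundant, since $x_I(t)\ge\alpha$ holds for all $t$ by contractivity of the support, so the differential inequality is valid almost everywhere without any case distinction.
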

\begin{proof} We first observe that $x_i(0)\geq \alpha$ for all $i=1,\ldots,N$ implies $x_i(t)\geq \alpha$ for all $t\geq 0$, due to Proposition \ref{p-contractive}. Consider the function $f(t):=x_I(t)-\Psi_L(\bar x,\alpha,t)$ and,  for $t\geq \tau$, compute
\begin{eqnarray}
\frac{d}{dt} f(t)&=&\frac1N \sum_{x_j(t) \leq \Psi_L(\bar x,\alpha,t),j\neq I} M_{Ij}(t)(x_j(t)-x_I(t))+\frac1N \sum_{x_j(t)> \Psi_L(\bar x,\alpha,t),j\neq I} M_{Ij}(t)(x_j(t)-x_I(t))+\nonumber \\
&& \tfrac{N-1}N \exp\left(-\tfrac{N-1}N t\right)(\bar x-\alpha)\nonumber \\
&\geq& \frac1N \sum_{x_j\leq \Psi_L(\bar x,\alpha,t),j\neq I} M_{Ij}(t)\bigg( \Big(\alpha-\Psi_L(\bar x,\alpha,t) \Big)   + \Big(\Psi_L(\bar x,\alpha,t)-x_I(t) \Big) \bigg) +\nonumber\\
&& \hspace{-5mm} \frac1N \sum_{x_j(t)> \Psi_L(\bar x,\alpha,t),j\neq I} M_{Ij}(t) \Big(\Psi_L(\bar x,\alpha,t)-x_I(t)\Big)  + \tfrac{N-1}N \exp\left(-\tfrac{N-1}N t\right)(\bar x-\alpha)\nonumber\\
&=& \frac1N \sum_{x_j\leq \Psi_L(\bar x,\alpha,t),j\neq I} M_{Ij}(t)\Big(\alpha-\Psi_L(\bar x,\alpha,t) \Big) + \frac1N \sum_{j\neq I} M_{Ij}(t)(\Psi_L(\bar x,\alpha,t)-x_I(t))+ \nonumber \\
&& \tfrac{N-1}N \exp\left(-\tfrac{N-1}N t\right)(\bar x-\alpha)\nonumber \\
&\geq &\frac1N \sum_{j\neq I} M_{Ij}(t) \Big(-  \exp\left(-\tfrac{N-1}N t\right)  (\bar x-\alpha) \Big)  - \bigg(\frac1N \sum_{j\neq I} M_{Ij}(t) \bigg) f(t) + \tfrac{N-1}N \exp\left(-\tfrac{N-1}N t\right)(\bar x-\alpha)\nonumber\\
&= &\frac1N \sum_{j\neq I} (1-M_{Ij}(t)) \exp\left(-\tfrac{N-1}N t\right)(\bar x-\alpha) 
 - \bigg(\frac1N \sum_{j\neq I} M_{Ij}(t) \bigg) f(t) \geq 0-r(t) f(t).
\label{e-stimaf}
\end{eqnarray}
In the last inequality, we used $M_{Ij}(t) \in[0,1]$ for $\mathcal{L}^1$-almost every $t \geq 0$ and defined $r(t):=\frac1N \sum_{j\neq I} M_{Ij}(t)$, that is independent on $f(t)$ and satisfies $r(t)\leq \frac{N-1}N$ for $\mathcal{L}^1$-almost every $t \geq 0$.
A direct application of Gr\"onwall's lemma at time $\tau$ ensures that
\begin{equation*}
f(t)\geq \exp\left(-\int_\tau^t r(s)\,ds\right) f(\tau).
\end{equation*}
Our hypothesis now reads as $f(\tau)\geq 0$, which implies that $f(t)\geq 0$ for $t\geq \tau$. This proves the result.
\end{proof}

\subsection{Estimates and connectivity graph}
\label{s-graph}

In this section, we provide estimates about trajectories of \eqref{e-ODEmain}, based on connections of the connectivity graph.

We first prove an estimate about the distance between the trajectories of two agents $x_i(t),x_j(t)$, in the case the arrow $i\to j$ exists in the connectivity graph.
\begin{proposition}\label{p-1step} Let $T,\mu>0$ be given. Let $x(t)$ be a solution of \eqref{e-ODEmain} with $x_k(0)\geq \alpha$ for all $k\in\{1,\ldots,N\}$. Assume that the $(T,\mu)$-connectivity graph $G(t)$ contains the directed edge $i\to j$. It then holds
\begin{eqnarray}
\label{e-1step}
x_i(T)\geq \alpha+\eta\exp(-2\tfrac{N-1}{N}T)(x_j(0)-\alpha)
\end{eqnarray}
with $\eta=\frac{\mu T}{N+\mu T}$.
\end{proposition}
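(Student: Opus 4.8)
The plan is to derive a pointwise lower bound on $x_i(t)$ valid on $[0,T]$, feed it into the dynamics to get a differential inequality for $x_i$, and then integrate against the measure $M_{ij}(s)\,ds$ using the $(T,\mu)$-connectivity hypothesis. First I would apply Proposition \ref{p-contractive} to get $x_k(t)\geq\alpha$ and $x_k(t)\leq x_+(0)$ for all $k$ and all $t\geq 0$; also, applying the left barrier Proposition \ref{p-barrier} with $\bar x = x_j(0)$ at index $j$ and time $\tau=0$ (where trivially $x_j(0)\geq\Psi_L(x_j(0),\alpha,0)=x_j(0)$), I get the lower bound $x_j(t)\geq \Psi_L(x_j(0),\alpha,t)=\alpha+\exp(-\tfrac{N-1}{N}t)(x_j(0)-\alpha)$ for all $t\in[0,T]$. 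This controls the single ``good'' neighbor $j$ of $i$ from below along the whole window.

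Next I would write the equation for $g(t):=x_i(t)-\alpha\geq 0$:
\begin{equation*}
\dot g(t)=\frac1N\sum_{k\neq i}M_{ik}(t)(x_k(t)-x_i(t))\geq \frac1N M_{ij}(t)(x_j(t)-x_i(t))-\frac1N\sum_{k\neq i}M_{ik}(t)(x_i(t)-\alpha),
\end{equation*}
where in the first sum I kept only the $k=j$ term of the positive part (legitimate since each $x_k-x_i\geq \alpha-x_i$) and in the retained $k=j$ term I bounded $x_k(t)-x_i(t)\geq x_j(t)-x_i(t)$ is not needed — rather I directly substitute the barrier bound $x_j(t)\geq\alpha+e^{-\frac{N-1}{N}t}(x_j(0)-\alpha)$ and $x_i(t)\leq x_+(0)$. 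Using $\sum_{k\neq i}M_{ik}(t)\leq N-1$ and $x_i(t)-\alpha=g(t)$, this yields a linear differential inequality of the form $\dot g(t)\geq \frac1N M_{ij}(t)\big(\alpha+e^{-\frac{N-1}{N}t}(x_j(0)-\alpha)-x_i(t)\big)-\frac{N-1}{N}g(t)$. The term $-x_i(t)$ inside the parenthesis is the nuisance; I would absorb it by bounding $-x_i(t)\geq -x_+(0)$ only if needed, but cleaner is to note $\alpha - x_i(t) = -g(t)$, so the bracket is $e^{-\frac{N-1}{N}t}(x_j(0)-\alpha)-g(t)$, giving
\begin{equation*}
\dot g(t)\geq \tfrac1N M_{ij}(t)e^{-\frac{N-1}{N}t}(x_j(0)-\alpha)-\Big(\tfrac{N-1}{N}+\tfrac1N M_{ij}(t)\Big)g(t)\geq \tfrac1N M_{ij}(t)e^{-\frac{N-1}{N}t}(x_j(0)-\alpha)-g(t),
\end{equation*}
using $\frac{N-1}{N}+\frac1N M_{ij}(t)\leq 1$.

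Then I would apply Grönwall's lemma (integrating-factor form) to $\dot g(t)\geq -g(t)+b(t)$ with $b(t)=\frac1N M_{ij}(t)e^{-\frac{N-1}{N}t}(x_j(0)-\alpha)\geq 0$, obtaining
\begin{equation*}
g(T)\geq e^{-T}g(0)+\int_0^T e^{-(T-s)}b(s)\,ds\geq \frac{x_j(0)-\alpha}{N}e^{-T}\int_0^T e^{-\frac{N-1}{N}s}M_{ij}(s)\,ds.
\end{equation*}
Since $e^{-\frac{N-1}{N}s}\geq e^{-\frac{N-1}{N}T}$ on $[0,T]$ and $\int_0^T M_{ij}(s)\,ds\geq \mu T$ by the connectivity hypothesis, this is at least $\frac{\mu T}{N}e^{-T}e^{-\frac{N-1}{N}T}(x_j(0)-\alpha)\geq \frac{\mu T}{N}e^{-2\frac{N-1}{N}T}(x_j(0)-\alpha)$, which already gives the claimed form with a constant $\frac{\mu T}{N}$ in place of $\eta=\frac{\mu T}{N+\mu T}$; the slightly better constant $\eta$ presumably comes from not throwing away the $\frac1N M_{ij}(t)g(t)$ damping term, i.e. keeping the integrating factor $\exp(\int(\frac{N-1}{N}+\frac1N M_{ij}))$ and estimating $\int_0^T M_{ij}\leq T$ in the exponent, so that the effective decay factor over $[0,T]$ picks up the extra $\mu T/(N+\mu T)$ normalization. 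The main obstacle is this bookkeeping: choosing exactly which terms to discard so that the surviving constant is $\eta=\mu T/(N+\mu T)$ rather than a cruder $\mu T/N$, and making sure the exponential rate lands on $-2\frac{N-1}{N}T$; everything else is a routine combination of the contractivity bound, the left-barrier estimate, and Grönwall's inequality.
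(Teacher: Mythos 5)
Your overall strategy is sound and genuinely different from the paper's: the paper introduces a second, lower barrier $\gamma_i=\Psi_L(\gamma_j(T)-l,\alpha,\cdot)$ at an intermediate level $l=(1+\mu T/N)^{-1}(\gamma_j(T)-x_i(0))$ and argues by contradiction that $x_i$ must cross it during $[0,T]$, whereas you use the single barrier for $x_j$ as a source term and run a direct Duhamel/Gr\"onwall estimate on $g=x_i-\alpha$. The preliminary steps (contractivity giving $x_k(t)\geq\alpha$, the barrier bound $x_j(t)\geq\alpha+e^{-\frac{N-1}{N}t}(x_j(0)-\alpha)$ via Proposition \ref{p-barrier} with $\tau=0$, and the differential inequality $\dot g\geq\frac1N M_{ij}(t)e^{-\frac{N-1}{N}t}(x_j(0)-\alpha)-c(t)g$ with $c(t)\leq 1$) are all correct. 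This also explains where $\eta=\frac{\mu T}{N+\mu T}$ comes from in the paper — it is $1-(1+\mu T/N)^{-1}$, an artifact of their choice of $l$ — and your route does not need to reproduce it, since $\frac{\mu T}{N}\geq\eta$.

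However, your last step contains a genuine error: the inequality $e^{-T}e^{-\frac{N-1}{N}T}\geq e^{-2\frac{N-1}{N}T}$ is reversed. Indeed $e^{-T}e^{-\frac{N-1}{N}T}=e^{-\frac{2N-1}{N}T}<e^{-\frac{2N-2}{N}T}=e^{-2\frac{N-1}{N}T}$, and the surplus in the prefactor does not save you: comparing $\frac{\mu T}{N}e^{-\frac{2N-1}{N}T}$ with $\frac{\mu T}{N+\mu T}e^{-\frac{2N-2}{N}T}$ requires $1+\frac{\mu T}{N}\geq e^{T/N}$, which fails for small $\mu$ and moderate $T/N$. So, as written, your chain of estimates does not yield \eqref{e-1step}. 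The fix stays entirely inside your framework. Either (a) use the sharp damping rate: since $\frac1N\sum_{k\neq i}M_{ik}\leq\frac{N-1}{N}$, you have $\dot g\geq b(t)-\frac{N-1}{N}g$ with $b(t)=\frac1N M_{ij}(t)e^{-\frac{N-1}{N}t}(x_j(0)-\alpha)$, and the integrating factor cancels the barrier's decay exactly:
\begin{equation*}
g(T)\geq\int_0^T e^{-\frac{N-1}{N}(T-s)}b(s)\,ds=\frac{x_j(0)-\alpha}{N}\,e^{-\frac{N-1}{N}T}\int_0^T M_{ij}(s)\,ds\geq\frac{\mu T}{N}\,e^{-\frac{N-1}{N}T}(x_j(0)-\alpha),
\end{equation*}
which is strictly stronger than \eqref{e-1step}; or (b) keep the crude rate $1$ but evaluate the product $e^{-(T-s)}e^{-\frac{N-1}{N}s}=e^{-T+s/N}\geq e^{-T}$ in one go (rather than bounding each factor separately), obtaining $g(T)\geq\frac{\mu T}{N}e^{-T}(x_j(0)-\alpha)$, and then note that $e^{-T}\geq e^{-2\frac{N-1}{N}T}$ precisely because $N\geq 2$. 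With either repair your argument is complete and in fact proves a sharper constant than the proposition states.
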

\begin{proof}
We first define the following barrier: $$\gamma_j(t):=\Psi_L(x_j(0),\alpha,t)=\alpha+\exp(-\tfrac{N-1}{N}t)(x_j(0)-\alpha).$$
As a first case, assume that $x_i(0)\geq \gamma_j(T)$. We rewrite it as $x_i(0)\geq \Psi_L(\gamma_j(T),\alpha,0)$  and apply Proposition \ref{p-barrier}. Remark that we can apply such proposition
to the left barrier $\gamma_j$, since $\gamma_j(T)\geq \alpha$ as a consequence of $x_j(0)\geq\alpha$. It then holds
\begin{eqnarray*}
x_i(T)&\geq& \Psi_L(\gamma_j(T),\alpha,T)\geq \alpha+\exp(-\tfrac{N-1}{N}T)(\alpha+\exp(-\tfrac{N-1}{N}T)(x_j(0)-\alpha)-\alpha)=\\
&&\alpha+\exp(-2\tfrac{N-1}{N}T)(x_j(0)-\alpha).
\end{eqnarray*}
This condition is stronger than the statement, since $1>\eta$.

We then assume that it holds $x_i(0)<\gamma_j(T)$ from now on. We define 
$$l:=\left(1+\frac{\mu T}N\right)^{-1}(\gamma_j(T)-x_i(0)),$$
which is strictly positive by hypothesis. We now define a second barrier function, that is
\begin{eqnarray*}
&&\gamma_i(t):=\Psi_L(\gamma_j(T)-l,\alpha,t)=\alpha+\exp(-\tfrac{N-1}{N}t)(\gamma_j(T)-l-\alpha)=\\
&&\alpha+\exp(-\tfrac{N-1}{N}(t+T))(x_j(0)-\alpha)-\exp(-\tfrac{N-1}{N}t)l.
\end{eqnarray*}

We have two cases: either $x_i(t)\geq \gamma_i(t)$ for some $t\in[0,T]$ or $x_i(t)< \gamma_i(t)$ for all $t\in[0,T]$. In the first case, by the definition of $l$ and recalling that $x_i(0),x_j(0)\geq\alpha$, we first observe the following estimate:
\begin{eqnarray*}
&&\exp(-\tfrac{N-1}{N}T)(x_j(0)-\alpha)\geq \left(1+\frac{\mu T}N\right)^{-1}\exp(-\tfrac{N-1}{N}T)(x_j(0)-\alpha) \geq l.
\end{eqnarray*}
This implies 
$\gamma_j(T)-l\geq \alpha$. We can then apply Proposition~\ref{p-barrier} to the left barrier $\gamma_i$, that ensures $x_i(T)\geq\gamma_i(T)$. This reads as
\begin{eqnarray*}
x_i(T)&\geq& \alpha+\exp(-2\tfrac{N-1}{N}T)(x_j(0)-\alpha)\\
&&-\exp(-\tfrac{N-1}{N}T)\left(1+\frac{\mu T}N\right)^{-1}(\alpha+\exp(-\tfrac{N-1}{N}T)(x_j(0)-\alpha)-x_i(0))\\
&=& \alpha+\exp(-2\tfrac{N-1}{N}T)\eta (x_j(0)-\alpha)+\exp(-\tfrac{N-1}{N}T)\left(1+\frac{\mu T}N\right)^{-1}(x_i(0)-\alpha)\\
&\geq& \alpha+\exp(-2\tfrac{N-1}{N}T)\eta (x_j(0)-\alpha).
\end{eqnarray*}
Here, we used that $x_i(0)-\alpha\geq 0$ by hypothesis on the support, and that $\eta=1-\left(1+\frac{\mu T}N\right)^{-1}$. Also in this case, the statement is proved.

We are then left with the case $x_i(t)< \gamma_i(t)$ for all $t\in[0,T]$. We prove that this case cannot occur, by contradition. We first observe that, due to the fact that left barriers are decreasing functions of time, for all $t\in[0,T]$ it holds both $x_i(t)< \gamma_j(T)-l$ and $x_j(t)\geq \gamma_j(T)$, i.e. $x_j(t)-x_i(t)\geq l$. We now provide an estimate similar to \eqref{e-stimaf} with this added information. We define $f(t):=x_i(t)-\gamma_i(t)$, that is strictly positive, continuous and differentiable for almost every $t\in[0,T]$. We also observe that the following identity holds:
$$\dot \gamma_i(t)=-\tfrac{N-1}{N}\exp(-\tfrac{N-1}{N}t)(\gamma_i(0)-\alpha).$$

 For points of differentiability, it then holds
\begin{eqnarray}
\frac{d}{dt} f(t)&=&\frac1N M_{ij}(t)(x_j(t)-x_i(t))+\frac1N \sum_{x_k(t) \leq \gamma_i(t),k\neq i} M_{ik}(t)(x_k(t)-x_i(t))\nonumber \\
& &+\frac1N \sum_{k\not\in\{ i,j\}, x_k(t)> \gamma_i(t)} M_{ik}(t)(x_k(t)-x_i(t))+\tfrac{N-1}{N}\exp(-\tfrac{N-1}{N}t)(\gamma_i(0)-\alpha)\nonumber \\
&\geq& \frac1N M_{ij}(t)l +\frac1N \sum_{x_k(t) \leq \gamma_i(t),k\neq i} M_{ik}(t)((\alpha-\gamma_i(t))+(\gamma_i(t)-x_i(t)))\nonumber \\
& &+\frac1N \sum_{k\not\in\{i, j\}, x_k(t)> \gamma_i(t)} M_{ik}(t)(\gamma_i(t)-x_i(t))+\tfrac{N-1}{N}\exp(-\tfrac{N-1}{N}t)(\gamma_i(0)-\alpha)\nonumber \\
&\geq& \frac1N M_{ij}(t)l +\frac1N \sum_{x_k(t) \leq \gamma_i(t),k\neq i} M_{ik}(t)(\alpha-\gamma_i(t))+\frac1N \sum_{k\not\in\{i, j\}} M_{ik}(t)(\gamma_i(t)-x_i(t))+\nonumber\\
&&\tfrac{N-1}{N}\exp(-\tfrac{N-1}{N}t)(\gamma_i(0)-\alpha).\nonumber 
\end{eqnarray}
We now observe that it holds $$\alpha-\gamma_i(t)= -\exp(-\tfrac{N-1}{N}t)(\gamma_i(0)-\alpha).$$ We then have 
\begin{eqnarray}
\frac{d}{dt} f(t)&\geq& \frac1N M_{ij}(t)l +\frac1N \sum_{k\neq i} M_{ik}(t)(-\exp(-\tfrac{N-1}{N}t)(\gamma_i(0)-\alpha)) \nonumber\\
&&-\left(\frac1N \sum_{k\not\in\{i, j\}} M_{ik}(t)\right) f(t) +\tfrac{N-1}{N}\exp(-\tfrac{N-1}{N}t)(\gamma_i(0)-\alpha)\nonumber\\
&\geq&\frac1N M_{ij}(t)l + \frac1N \sum_{k\neq i} (1-M_{ik}(t))\exp(-\tfrac{N-1}{N}t)(\gamma_i(0)-\alpha)) -\left(\frac1N \sum_{k\neq i} M_{ik}(t)\right) f(t)\nonumber\\
&\geq& \frac1N M_{ij}(t)l +0- \frac{N-1}N f(t).
\label{e-stimaf2}
\end{eqnarray}
Again, here we used $M_{ik}(t)\in [0,1]$. A direct application of  Gr\"onwall's lemma now implies 
\begin{eqnarray*}
&&f(T)\geq \exp(-\tfrac{N-1}{N}T)\left[ f(0)+\int_0^T\exp(s)\frac1N M_{ij}(s) l\,ds\right]\\
&&\geq \exp(-\tfrac{N-1}{N}T) \left[x_i(0)-\gamma_i(0)+1\cdot \frac l N  \int_0^T
M_{ij}(s)\,ds\right].
\end{eqnarray*}
Since $i\to j$ is an arrow of $G(t)$, it holds $\frac1T \int_{0}^T M_{ij}(s)\,ds\geq \mu$. It then holds
\begin{eqnarray*}
x_i(T)-\gamma_i(T)&=&f(T)\geq \exp(-\tfrac{N-1}{N}T) \left(x_i(0)-\left(\gamma_j(T)-l\right)+ \frac{l\, T}N \mu\right)=\\
&&\exp(-\tfrac{N-1}{N}T) \left(x_i(0)-\gamma_j(T)+l\left(1+\frac{\mu\, T}N\right)\right)=0.
\end{eqnarray*}
This raises a contradiction with the condition $x_i(t)< \gamma_i(t)$ for all $t\in[0,T]$.\end{proof}

We now extend the previous result to a whole path from $i$ to $j$. We highlight that the extensions requires that the path is in the connectivity graph for several time intervals at distance $T$.
\begin{proposition}\label{p-keylemma}Let $T,\mu>0$ be given. Let $x(t)$ be a solution of \eqref{e-ODEmain} with $x_i(0)\geq \alpha$ for all $i\in\{1,\ldots,N\}$. Assume that the $(T,\mu)$-connectivity graph $G(t)$ contains the directed sequence ${i_0}\to {i_1}\to\ldots \to {i_k}$ for all times $t=0,T,\ldots,(k-1)T$. Then, for all $t\geq kT$ it holds
\begin{equation}
\label{e-frecursive}
x_{i_{l}}(t)\geq \alpha+\eta^k\exp(-2\tfrac{N-1}{N}t)(x_{i_k}(0)-\alpha)
\end{equation}
for all $l\in\{0,\ldots,k\}$, where $\alpha:=\min_{j\in\{1,\ldots,N\}} x_j(0)$ and $\eta:=\frac{\mu T}{N+\mu T}$.
\end{proposition}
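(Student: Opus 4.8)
The plan is to propagate the information contained in $x_{i_k}(0)$ backward along the path, one edge per time-window of length $T$, by repeatedly invoking the single-edge estimate of Proposition~\ref{p-1step} on time-shifted copies of the trajectory, and then to upgrade each resulting pointwise-in-time lower bound to one valid for all later times via the left barrier of Proposition~\ref{p-barrier}. Observe first that $x_j(0)\geq\alpha$ for all $j$ forces, by Proposition~\ref{p-contractive}, $x_j(t)\geq\alpha$ for all $t\geq0$; hence for every $s_0\geq0$ the shifted map $s\mapsto x(s+s_0)$ is again a solution of~\eqref{e-ODEmain} with connection functions valued in $[0,1]$ and the same lower bound $\alpha$, and its $(T,\mu)$-connectivity graph at time $0$ contains the edge $i_m\to i_{m+1}$ if and only if $\tfrac1T\int_{s_0}^{s_0+T}M_{i_m i_{m+1}}(s)\,ds\geq\mu$, i.e.\ if and only if $G(s_0)$ does. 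In particular, by hypothesis every edge of the path belongs to the time-$0$ connectivity graph of $x(\cdot+s_0)$ for each $s_0\in\{0,T,\dots,(k-1)T\}$.

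I would then prove, by induction on $l\in\{0,1,\dots,k\}$ (\emph{not} on $k$), the estimate
\begin{equation*}
x_{i_{k-l}}(lT)\ \geq\ \alpha+\eta^{\,l}\exp\!\big(-2\tfrac{N-1}{N}\,lT\big)\big(x_{i_k}(0)-\alpha\big).\tag{$C_l$}
\end{equation*}
For $l=0$ this is the identity $x_{i_k}(0)\geq x_{i_k}(0)$. For the step from $l-1$ to $l$, the edge $i_{k-l}\to i_{k-l+1}$ belongs to the connectivity graph $G((l-1)T)$ since $l-1\in\{0,\dots,k-1\}$, so Proposition~\ref{p-1step} applied to the shifted solution $s\mapsto x(s+(l-1)T)$ gives
\begin{equation*}
x_{i_{k-l}}(lT)\ \geq\ \alpha+\eta\exp\!\big(-2\tfrac{N-1}{N}T\big)\big(x_{i_{k-l+1}}((l-1)T)-\alpha\big).
\end{equation*}
Inserting $(C_{l-1})$ — whose right-hand side minus $\alpha$ is nonnegative because $x_{i_k}(0)\geq\alpha$ — and multiplying the exponentials and the powers of $\eta$ yields exactly $(C_l)$.

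Finally, for the index $l\in\{0,\dots,k\}$ of the statement I would use $(C_{k-l})$, which controls $x_{i_l}$ at the single time $(k-l)T$, and apply Proposition~\ref{p-barrier} to the shifted solution $s\mapsto x(s+(k-l)T)$ at time $\tau=0$, with $\bar x$ chosen equal to the right-hand side of $(C_{k-l})$ (which is $\geq\alpha$, so the barrier hypothesis $\bar x\geq\alpha$ holds). This gives, for all $t\geq(k-l)T$,
\begin{equation*}
x_{i_l}(t)\ \geq\ \alpha+\eta^{\,k-l}\exp\!\big(-\tfrac{N-1}{N}(t+(k-l)T)\big)\big(x_{i_k}(0)-\alpha\big).
\end{equation*}
For $t\geq kT$ one has $(k-l)T\leq t$, hence $t+(k-l)T\leq 2t$ and the exponential is at least $\exp(-2\tfrac{N-1}{N}t)$; since $0<\eta<1$ also $\eta^{\,k-l}\geq\eta^{\,k}$, and $x_{i_k}(0)-\alpha\geq0$, so~\eqref{e-frecursive} follows. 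The only genuinely delicate part is the bookkeeping of the exponential and $\eta$-power factors generated by the successive shifts: at each comparison one must check that the inequality is oriented so as to weaken the lower bound (as in $t+(k-l)T\leq 2t$, $\eta^{k-l}\geq\eta^{k}$, and the telescoping of the $\exp(-2\tfrac{N-1}{N}T)$'s). Beyond this, no analytic input other than Propositions~\ref{p-barrier}, \ref{p-1step}, and~\ref{p-contractive} is needed.
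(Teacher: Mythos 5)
Your proposal is correct and follows essentially the same two-stage argument as the paper: a backward induction along the path using Proposition~\ref{p-1step} one edge per window of length $T$ (your $(C_l)$ is exactly the paper's recursive estimate~\eqref{e-inductionstep} with $\ell=l$), followed by a forward-in-time extension via the left barrier of Proposition~\ref{p-barrier} and the same bookkeeping $t+(k-l)T\leq 2t$, $\eta^{k-l}\geq\eta^{k}$. The only differences are cosmetic — you make the time-shifting explicit, start the induction at $l=0$ rather than $l=1$, and treat the index $l=k$ uniformly instead of as a separate case.
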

\begin{proof} We first observe that 
$\eta\in(0,1)$. The case $l=k$ is then a consequence of applying Proposition \ref{p-barrier} for $k-1$ times. We then focus on $l\in\{0,\ldots,k-1\}$. For simplicity of notation, we use the index $\ell:=k-l$. 

 We first prove the following recursive version of Proposition \ref{p-1step}: for $\ell\in\{1,\ldots, k\}$ it holds
\begin{equation}\label{e-inductionstep}x_{i_{k-\ell}}(\ell T)\geq
\alpha+\eta^\ell \exp(-2\tfrac{N-1}{N}\ell T)(x_{i_k}(0)-\alpha).
\end{equation}
The proof is as follows: the case $\ell=1$ is exactly Proposition~\ref{p-1step} applied with 
$i=i_{k-1}$ and $j=i_k$, 
and observing that $G(0)$ contains the directed edge $i_{k-1}\to i_k
$. We now assume that the formula holds for $\ell\in\{1,\ldots,k-1\}$ and prove it for $\ell+1$. \\
We first apply Proposition \ref{p-1step} by choosing $x_j(0)=x_{i_{k-\ell}}(\ell\,T)$ and observing that $G(\ell\,T)$ contains the arrow $i_{k-(\ell+1)}\to i_{k-\ell}$ by construction. It then holds
\begin{equation}
\label{ind-proof1b}
x_{i_{k-(\ell+1)}}((\ell+1)T)\geq \alpha+\eta\exp(-2\tfrac{N-1}{N}T)(x_{i_{k-\ell}}(\ell\,T)-\alpha).
\end{equation}

Next, by applying the induction hypothesis \eqref{e-inductionstep}, we deduce
\begin{equation}
\label{ind-proof2}
\begin{aligned}
x_{i_{k-\ell}}(\ell T)&\geq \alpha+\eta\exp(-2\tfrac{N-1}{N}T)(
\alpha+\eta^{\ell-1}\exp(-2\tfrac{N-1}{N}(\ell-1)T)(x_{i_k}(0)-\alpha)
-\alpha)\\
&=\alpha+\eta^{\ell}\exp(-2\tfrac{N-1}{N}\ell T)(x_{i_k}(0)-\alpha).
\end{aligned}
\end{equation}
Combining~\eqref{ind-proof1b}, \eqref{ind-proof2}, we deduce
\begin{eqnarray*}
&&x_{i_{k-(\ell+1)}}((\ell +1)T)\geq\alpha+\eta^{\ell+1} \exp(-2\tfrac{N-1}{N}(\ell +1)T)(x_{i_k}(0)-\alpha).
\end{eqnarray*}
The recursive formula \eqref{e-inductionstep} is then proved.

We now need to estimate the evolution of $x_{i_{k-\ell}}$ on the time interval $[\ell T,t]$. It is sufficient to apply Proposition \ref{p-barrier}, first on the $\lceil\tfrac{t}{T}\rceil-\ell$ intervals of length $T$, then on the time interval $\left[\lceil\tfrac{t}{T}\rceil T,\,t\right]$. Since the solution already starts on the right of the left barrier at time $\ell T$, this gives
\begin{eqnarray}
&&x_{i_{k-\ell}}(t)\geq \Psi_L(x_{i_{k-\ell}}(\ell T),\alpha,t-\ell T)\nonumber\\
&&\geq \alpha+\exp(-\tfrac{N-1}{N}(t-\ell T))(\alpha+\eta^{\ell}\exp(-2\tfrac{N-1}{N}\ell T)(x_{i_k}(0)-\alpha)-\alpha)\nonumber\\
&&=\alpha+\eta^{\ell}\exp(-\tfrac{N-1}{N}(t+\ell T))(x_{i_k}(0)-\alpha)\geq
\alpha+\eta^k\exp(-2\tfrac{N-1}{N}t)(x_{i_k}(0)-\alpha).\label{e-xid}
\end{eqnarray}
Here, we just used $\ell T\leq t$ and $\eta\in(0,1)$. By writing $l=k-\ell$, one gets the result.
\end{proof}

\subsection{Proof of Theorem \ref{t-main} and Corollary \ref{c-main}} \label{s-proof}
We are now ready to prove Theorem \ref{t-main}.

\begin{proof} We first consider solution of \eqref{e-ODEmain}, i.e. of \eqref{e-ODE} in 1 dimension with linear dynamics. Denote the support of the solution at time $t=0$ as the interval $[\alpha,\beta]$. The diameter is then $\diam{0}=\beta-\alpha$.

Under the hypothesis of the theorem, there exists a graph $G^*$ such that for all $l\in\N$ the graph $G(lT)$ contains all its arrows. Moreover, $G^*$ admits a globally reachable node $I$. Since the length of the graph $G^*$ is $d^*:=d(G^*,I)$, for each index $i\in\{1,\ldots,N\}$ there exists a directed sequence $i\to\ldots\to I$ of length $k\leq d^*$. Then, by applying Proposition \ref{p-keylemma} and observing that $d^* T\geq kT$, it holds
\begin{equation}\label{e-generic}
x_i(d^* T)\geq  \alpha+\eta^k\exp(-2\tfrac{N-1}{N}d^*T)(x_{I}(0)-\alpha).
\end{equation}
We now have two cases:
\begin{itemize}
\item it holds $x_I(0)\geq \frac{\alpha+\beta}{2}$. Then, \eqref{e-generic} implies 
$$x_i(d^*T)\geq  \alpha+\eta^{d(G^*,I)}\exp(-2\tfrac{N-1}{N}d(G^*,I)T)\frac{\beta-\alpha}{2}.$$
The estimate holds for any index $i$, then the support satisfies 
\begin{eqnarray}\label{diam-est1}
\
&&\diam{d^*T}\leq \beta- \alpha-\eta^{d^*}\exp(-2\tfrac{N-1}{N}d^*T)\tfrac{\beta-\alpha}{2} =  C \diam{0},\label{e-stimaxi}
\end{eqnarray}
with $C:=1-\tfrac12 \eta^{d^*}\exp(-2\tfrac{N-1}{N}d^*T).$

\item it holds $x_I(0)<\frac{\alpha+\beta}2$. Then, one replaces all $x_i(t)$ with $y_i(t)=\alpha+\beta-x_i(t)$, that satisfy all the hypotheses of Theorem \ref{t-main}. Moreover, it holds $y_I(0)\geq \frac{\alpha+\beta}2$, thus the results of the previous item hold, i.e. \eqref{e-stimaxi} holds for the $y_i$ variables. By writing $x_i(t)=\alpha+\beta-y_i(t)$, we have that \eqref{e-stimaxi} holds for the $x_i$ too.
\end{itemize}
By merging the two cases, we have that \eqref{e-stimaxi} holds. By induction, for all $n\in\mathbb{N}$ it holds
\begin{eqnarray*}
&&\diam{nd^*T}\leq C^n \diam{0}.
\end{eqnarray*}
We now aim to estimate the evolution of the support on the time interval $[nd^*T,(n+1)d^*T]$. On one side, the support is non-increasing, then for all $t\in [nd^*T,(n+1)d^*T]$ it holds
\begin{eqnarray}\diam{t}\leq  C^n \diam{0}.\label{e-stimaL}
\end{eqnarray}
 On the other side, it is sufficient to observe that $$|\dot x_i|\leq\frac1N \sum_{j=1}^N 1\cdot |x_j(t)-x_i(t)|\leq \frac{N-1}N \diam{t}$$
to deduce that it holds $$\frac{d}{dt}\diam{t}\geq -\max_{i,j\in\{1,\ldots,N\}} |x_i(t)|+|x_j(t)|\geq -2\frac{N-1}N \diam{t}.$$
This clearly implies that for all $\tau\in[0,d^*T]$ (seen as reverse time from $(n+1)d^*T]$ it holds
\begin{eqnarray}&&\diam{(n+1)d^*T-\tau}\leq \exp\left(2\frac{N-1}N\tau\right) \diam{ (n+1)d^*T} \leq C^{n+1} \exp\left(2\frac{N-1}N\tau\right) \diam{0}.\label{e-stimaR}
\end{eqnarray}
By choosing the optimal available estimate, we have that \eqref{e-stimaL} holds for $t\in[nd^*T,(n+1)d^*T-\delta]$ with $\delta=-\frac{N}{2(N-1)}\log(C)$, while \eqref{e-stimaR} holds for $t\in [(n+1)d^*T-\delta,(n+1)d^*T]$.

The $n$-dimensional case is a direct consequence, by applying Proposition \ref{p-multid}, with no rescaling of time.
\end{proof}

We now prove Corollary \ref{c-main}.

\begin{proof} We first rewrite \eqref{e-ODE} as a linear system, as in Proposition \ref{p:non-lin}. The $(T,\mu)$-connectivity graph $G(t)$ is transformed into the $(T,\underline{m}\mu)$-connectivity graph $\tilde G(t)$. We then apply Proposition \ref{p-multid}: the system is now 1-dimensional, the graph $\tilde G(t)$ is transformed into the $(T\overline{m},\underline{m}\mu/\overline{m})$-connectivity graph $\hat G(t\overline{m})$ and time is rescaled from $t$ to $t\overline{m}$. For such system, Theorem \ref{t-main} holds, where in the definitions of $\eta,C,\delta$ one needs to replace $T$ with $T\overline{m}$ and $\mu$ with $\underline{m}\mu/\overline{m}$. We then translate \eqref{e-stimaR} back to the original system \eqref{e-ODE}, by rescaling time and find the result.
\end{proof}

\section{Additional results}
\label{s-consequences}

In this section, we provide some consequences of our main Theorem \ref{t-main}. This also allows to compare our results with similar contributions in the literature.

\subsection{Persistent excitation}

In this section, we focus on the concept of Persistent Excitation, introduced and studied in \cite{narendra2012stable,ChSi2010,ChSi2014,ren2008distributed,tang2020bearing,manfredi2016criterion,bonnet2021consensus,anderson2016convergence,chaillet2008uniform}. We recall its definition.

\begin{definition}[Persistent Excitation] Let $T,\mu>0$. The connection functions $M_{ij}(t)$ satisfy the Persistent Excitation condition with parameters $(T,\mu)$ if for all indexes $i,j$ and times $t\geq 0$ it holds $\frac1T \int_t^{t+T}M_{ij}(s)\,ds\geq \mu$.
\end{definition}

It is clear that the condition can be restated as follows: the $(T,\mu)$-connectivity graph given in Definition \ref{d-conngraph} is complete, i.e. for all pair of distinct indexes $i,j$ there exists an edge $i\to j$ (and $j\to i$). As a consequence, the hypotheses of Theorem \ref{t-main} hold: the graph $G^*$ is the complete graph, thus all nodes are globally reachable and the length is $d^*=1$. The following corollary is then directly proved.

\begin{corollary} Let the system \eqref{e-ODE} be given and assume hypotheses (H1)-(H2) from Corollary \ref{c-main} hold. Assume moreover that the Persistent Excitation condition holds for some $(T,\mu)$. It then holds 
$$\max_{i,j}|x_i(nT)-x_j(nT)|\leq C^n \max_{i,j}|x_i(0)-x_j(0)|$$
with 
$$C=1-\frac12 \left( \tfrac{\underline{m}\mu T}{N+\underline{m}\mu T}    \right)\exp(-2\tfrac{N-1}{N}T\overline{m}),$$
and $\underline{m},\overline{m}$ are given by \eqref{e-um}-\eqref{e-om}.

\end{corollary}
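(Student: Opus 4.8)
The plan is to specialize Theorem~\ref{t-main} (in its general form, i.e. via Corollary~\ref{c-main}) to the situation where the $(T,\mu)$-connectivity graph is complete. First I would observe, as already noted in the text preceding the statement, that the Persistent Excitation condition with parameters $(T,\mu)$ is exactly the requirement that for every ordered pair of distinct indices $i,j$ one has $\frac1T\int_t^{t+T}M_{ij}(s)\,ds\ge\mu$ for all $t\ge0$; hence every arrow $i\to j$ is present in $G(t)$ for all $t$, so $G(t)$ is the complete graph at all times. In particular one may take $G^*$ to be the complete graph: it is constant in time, all its arrows lie in $G(kT)$ for every $k\in\N$, every node is globally reachable, and for any chosen globally reachable node $I$ one has $d(i,I)=1$ for all $i\ne I$, so $d^*=d(G^*,I)=1$.

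With $d^*=1$ verified, the statement follows by direct substitution into Corollary~\ref{c-main}. The time scale becomes $\tau=d^*T/\overline m=T/\overline m$; however, since Persistent Excitation is phrased for the original (unrescaled) system \eqref{e-ODE}, I would track the time-rescaling of Proposition~\ref{p-multid} carefully so that the conclusion is stated at the sampling times $t=nT$ of the original system, which matches the claimed inequality $\max_{i,j}|x_i(nT)-x_j(nT)|\le C^n\max_{i,j}|x_i(0)-x_j(0)|$. Plugging $d^*=1$ into the constant $C$ from Corollary~\ref{c-main},
\begin{equation*}
C=1-\frac12\left(\tfrac{\underline m\mu T}{N+\underline m\mu T}\right)^{d^*}\exp(-2d^*T\overline m)\Big|_{d^*=1}
=1-\frac12\left(\tfrac{\underline m\mu T}{N+\underline m\mu T}\right)\exp(-2T\overline m),
\end{equation*}
and one recovers the asserted value up to the factor $\tfrac{N-1}{N}$ in the exponent, which comes from using the sharper barrier estimate $r(t)\le\tfrac{N-1}{N}$ from Proposition~\ref{p-barrier} rather than the crude bound; I would simply cite the form of $C$ produced by the proof of Theorem~\ref{t-main} (equation \eqref{e-stimaxi}), which carries the $\tfrac{N-1}{N}$ factor, and then apply the substitutions $T\mapsto T\overline m$, $\mu\mapsto\underline m\mu/\overline m$ dictated by the reduction.

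There is essentially no obstacle here: the work is entirely in correctly bookkeeping the two reductions (nonlinear-to-linear via Proposition~\ref{p:non-lin}, and multidimensional-to-$1$D-with-rescaled-time via Proposition~\ref{p-multid}) so that the parameters $\underline m,\overline m$ enter in the right places and the final inequality is stated at the correct times. The only point requiring a line of care is that $\underline m,\overline m$ in \eqref{e-um}--\eqref{e-om} are defined using the a priori bound $|x_k(t)|\le\max_i|x_i(0)|$, which is legitimate because Proposition~\ref{p-contractive} guarantees the support — hence every $|x_k(t)|$ — is nonincreasing, so the minimum and maximum of $\lambda_i\phi_{ij}$ over that ball control the rescaled connection weights along the whole trajectory. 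Once this is in place, the corollary is immediate.
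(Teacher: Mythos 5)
Your proposal is correct and follows exactly the paper's own (very short) argument: Persistent Excitation means the $(T,\mu)$-connectivity graph is complete at all times, so $G^*$ can be taken to be the complete graph with $d^*=1$, and the estimate is then a direct specialization of Corollary~\ref{c-main}. Your extra care about the $\tfrac{N-1}{N}$ factor in the exponent (which indeed comes from the sharper constant in \eqref{e-stimaxi} inside the proof of Theorem~\ref{t-main} rather than from the stated \eqref{e-C}) and about the time-rescaling bookkeeping is a point the paper silently glosses over, and is handled correctly.
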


\subsection{Integral Scrambling Coefficients} \label{s-ISC}
In this section, we focus on the concept of Integral Scrambling Coefficients (ISC from now on), introduced and studied in \cite{bonnet2023consensus,bonnet2022consensus}. We recall its definition.

\begin{definition}[Integral Scrambling Coefficients] Let $T,\mu>0$. The connection functions $M_{ij}(t)$ satisfy the Integral Scrambling Coefficients condition with parameters $(T,\mu)$ if for all indexes $i,j$ and times $t\geq 0$ there exists an index $k$ such that both the following conditions hold: $$\frac1T \int_t^{t+T}M_{ik}(s)\,ds\geq \mu,\qquad
\frac1T \int_t^{t+T}M_{jk}(s)\,ds\geq \mu.$$
\end{definition}
It is clear that this condition is a generalization of the Persistent Excitation condition, as it is satisfied by choosing $k=i,j$. Instead, it is remarkable to observe that the original Moreau Condition is not a generalization of the ISC condition for the following reason: the index $k$ depends on $i,j$, but also on time $t$, i.e. the  $(T,\mu)$-connectivity graph is not constant with respect to time. The goal of this section is then to study the $(T,\mu)$-connectivity graphs given by ISC condition and to build another connectivity graph with different parameters $T',\mu'$ that satisfies hypotheses of Theorem \ref{t-main}. Our result is the following.

\begin{corollary} Let the system \eqref{e-ODE} be given and assume hypotheses (H1)-(H2) from Corollary \ref{c-main} hold. Assume moreover that the Integral Scrambling Coefficients condition holds for some $(T,\mu)$. It then holds 
\begin{eqnarray}
&&\hspace{-5mm}\max_{i,j}|x_i(n(N-1)2^{N(N-1)}T)-x_j(n(N-1)2^{N(N-1)}T)| \leq C^n \max_{i,j}|x_i(0)-x_j(0)|, \label{est-ISC}
\end{eqnarray}
with 
$$C=1-\left( \tfrac{\underline{m}\mu T}{N+\underline{m}\mu T}    \right)^{N-1}\exp(-2\tfrac{N-1}{N}(N-1)^{(2^{N(N-1)})}T\overline{m}),$$
where $\underline{m},\overline{m}$ are given by \eqref{e-um}-\eqref{e-om}.
\end{corollary}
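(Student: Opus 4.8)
The plan is to reduce the Integral Scrambling Coefficients condition to the hypotheses of Theorem~\ref{t-main} by constructing, out of the time-varying ISC structure, a \emph{constant} connectivity graph on a longer time scale. First I would work, as justified by Propositions~\ref{p:non-lin} and~\ref{p-multid}, with the $1$-dimensional linear system~\eqref{e-ODEmain}, so that the only data are the functions $M_{ij}(t)\in[0,1]$ satisfying the ISC condition with parameters $(T,\mu)$ (after absorbing $\underline m,\overline m$ into the parameters, exactly as in the proof of Corollary~\ref{c-main}). The key observation is that on each time window $[kT,(k+1)T]$ the ISC condition assigns to every unordered pair $\{i,j\}$ a ``hub'' index $k_{ij}(k)$; this defines a coloring of the pairs, and there are at most $2^{\binom N2}=2^{N(N-1)/2}$ possible such colorings. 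Hence among any $2^{N(N-1)/2}$ consecutive windows at least two give the same coloring — better, by pigeonhole on the sequence of colorings one can extract a colored pattern that repeats along an arithmetic progression of windows.

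Next I would turn a single repeated coloring into an arrow structure. Fix a coloring $\chi$ that occurs on windows $k\in S$ for an infinite arithmetic-progression-like set $S$. For a pair $\{i,j\}$ with hub $h=\chi(\{i,j\})$, Proposition~\ref{p-1step} (applied on the relevant window, with the roles $i\to h$ and $j\to h$) gives a one-step contraction of the ``distance to the value $x_h$'' for both $i$ and $j$. Iterating this over a block of consecutive windows carrying the coloring $\chi$ — here Proposition~\ref{p-keylemma} is the workhorse, since it propagates a lower barrier along a directed path that is present on several consecutive windows at spacing $T$ — I would show that after a controlled number of windows every agent's value is pulled a definite fraction of the way toward a common reference. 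Concretely, the scrambling structure guarantees that for every pair of agents there is a length-$2$ path (through the hub) to a common node; chaining these across the repeated blocks builds an effective directed spanning structure with a globally reachable node and length $d^*\le N-1$. This is the step where one must be careful: one needs the \emph{same} path geometry available on $d^*$ (or fewer) consecutive windows of the progression, which is why the window count blows up to $(N-1)\,2^{N(N-1)}$ — roughly $2^{N(N-1)}$ to force a repeated coloring and a further factor $N-1$ to realize paths of length up to $N-1$.

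Once such an effective constant graph $G^*$ with globally reachable node $I$ and $d^*\le N-1$ is identified on the coarse time grid of step $\tau':=(N-1)2^{N(N-1)}T$, I would invoke Theorem~\ref{t-main} verbatim with $T$ replaced by $\tau'$ in the role of the connectivity window — or, more cleanly, re-run the barrier argument of Proposition~\ref{p-keylemma} directly over the block of windows, obtaining for each $i$ a bound $x_i(\tau')\ge \alpha+\eta^{d^*}\exp(-2\tfrac{N-1}{N}\,\text{(number of windows)}\,T)(x_I(0)-\alpha)$ with $\eta=\tfrac{\underline m\mu T}{N+\underline m\mu T}$. Splitting into the two cases $x_I(0)\gtrless\tfrac{\alpha+\beta}2$ and using the reflection $y_i=\alpha+\beta-x_i$ exactly as in the proof of Theorem~\ref{t-main} yields $\diam{\tau'}\le C\,\diam0$ with the stated constant $C=1-\big(\tfrac{\underline m\mu T}{N+\underline m\mu T}\big)^{N-1}\exp(-2\tfrac{N-1}{N}(N-1)^{2^{N(N-1)}}T\overline m)$; iteration over $n$ gives~\eqref{est-ISC}, and finally the passage back through Propositions~\ref{p-multid} and~\ref{p:non-lin} restores the original (multidimensional, nonlinear) system with the $\underline m,\overline m$-dependent parameters.

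\textbf{Main obstacle.} The delicate point is the combinatorial bookkeeping in the middle step: converting ``the scrambling hub assignment repeats'' into ``a fixed directed path of length $\le d^*$ is available on $d^*$ consecutive windows of an arithmetic progression.'' One must ensure simultaneously (i) that the coloring repeats often enough along an arithmetic progression so that Proposition~\ref{p-keylemma}'s hypothesis — the path present at times $0,T,\dots,(k-1)T$ — is met, and (ii) that chaining hubs of different pairs actually yields a single globally reachable node rather than several disconnected ``scrambling islands.'' Getting the exponents $N-1$ and $2^{N(N-1)}$ to come out exactly as stated is where the care lies; the analytic estimates, once the graph is fixed, are a direct replay of Propositions~\ref{p-barrier}--\ref{p-keylemma} and Theorem~\ref{t-main}.
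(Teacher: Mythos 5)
Your overall strategy --- reduce to the scalar linear system, pigeonhole over the finitely many possible connectivity graphs, apply the barrier/path machinery on a block where the graph is constant, and use contractivity of the support elsewhere --- is the same skeleton as the paper's proof. But the two points you flag as ``the delicate point'' are exactly where your argument is incomplete, and the paper resolves them with specific devices that your proposal does not supply.

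First, the globally reachable node. You worry about ``scrambling islands,'' and rightly so; the paper closes this by iterating the hub map. Writing $\Gamma(i,j)$ for the common hub of the pair $\{i,j\}$ at a fixed time $t$, one extends $\Gamma$ to tuples by pairing off entries, so that each application strictly shortens the tuple; applying it $N-1$ times to $(1,\ldots,N)$ collapses everything to a single index $I$, which is then globally reachable in $G(t)$ with $d(G(t),I)\le N-1$. This is a short but necessary construction, not an obstacle to be hoped away. Second, and more seriously, your pigeonhole step does not deliver what Proposition~\ref{p-keylemma} needs, namely the \emph{same} graph on $N-1$ \emph{consecutive} windows of length $T$. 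Pigeonhole over $2^{N(N-1)}$ windows only gives two equal graphs somewhere, and ``a pattern repeating along an arithmetic progression'' (which would need a van der Waerden--type statement, with far worse constants) is not how the paper proceeds. The actual mechanism is an elimination-by-coarsening argument: if no block of $N-1$ consecutive windows carries the graph $H^1$, merge each block of $N-1$ windows into one window of length $T_2=(N-1)T$ with threshold $\mu_2=\mu/(N-1)$ (an arrow with average $\ge\mu$ on one sub-window has average $\ge\mu/(N-1)$ on the merged window, and $\mu_kT_k=\mu T$ is preserved so $\eta$ is unchanged); the coarsened connectivity graphs then avoid $H^1$, one candidate is eliminated, and the process repeats at most $2^{N(N-1)}$ times. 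This iterated rescaling $T_k=(N-1)^{k-1}T$ is precisely what produces the factor $(N-1)^{\left(2^{N(N-1)}\right)}$ \emph{inside the exponential} of $C$; your accounting (``$2^{N(N-1)}$ to force a repetition times $N-1$ to realize paths'') yields only a product $(N-1)\cdot 2^{N(N-1)}$ in the time horizon and would not reproduce the stated constant. A minor further slip: the relevant configurations are the induced simple directed graphs (at most $2^{N(N-1)}$ of them), not hub colorings of unordered pairs, of which there are $N^{\binom{N}{2}}$, not $2^{\binom{N}{2}}$.
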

\begin{proof} Let $T,\mu$ be fixed. For each time $t\geq 0$, consider $G(t)$ the corresponding $(T,\mu)$-connectivity graph. The ISC condition ensures the following: for each pair $i,j$ there exists $k$ such that both $i\to k$ and $j\to k$ are contained in $G(t)$. It is easy to observe that this condition ensures that $G(t)$ admits a globally reachable node. Indeed, for a given $t\geq 0$ first denote with $\Gamma(i,j):=k$ the index\footnote{If more than one index $k$ satisfies the condition, it is sufficient to choose one among them.} given by the ISC condition. Then, extend the operator to finite sequences of length larger than 2:
\begin{itemize}
\item for  even $2n$-tuples
$$\Gamma(i_1,\ldots,i_{2n}):=\Gamma\big(\Gamma(i_1,i_2),\Gamma(i_3,i_4),\ldots,\Gamma(i_{2n-1},i_{2n})\big);$$
\item for  odd $2n+1$-tuples
$$\Gamma(i_1,\ldots,i_{2n+1}):=\Gamma\big(\Gamma(i_1,\ldots,i_{2n}),i_{2n+1}\big).$$
\end{itemize}

For simplicity, also define $\Gamma(i)=i$ for a single element. The image of such operator is an $n$-tuple with length strictly smaller than the one in the domain of $\Gamma$, except if the $n$-tuple consists of a single element. As a consequence, there exists a single element $I\in\{1,\ldots,N\}$ that satisfies 
$$I=\Gamma(I)=\Gamma^{N-1}(1,\ldots,N),$$
where $\Gamma^{N-1}$ is the composition of $\Gamma$ for $N-1$ times. By definition of the $(T,\mu)$-connectivity graph, this implies that $G(t)$ admits a globally reachable node, that is indeed $I$. The length $d(G(t),I)$ is clearly bounded from above by $N-1$ (as for any graph with a globally reachable node). One can moreover observe that $G(t)$ is a simple graph, i.e. for each pair of indexes $i,j$ there exists either zero or one edge $i\to j$.

We are now ready to define a (finite) sequence of $T_k,\mu_k>0$ and a corresponding $(T_k,\mu_k)$-connectivity graphs $G_k(t)$ for which Corollary \ref{c-main} applies. First reduce the study to the $(T,\mu)$-connectivity graphs $G(nT)$ with $n=0,\ldots,(N-1)2^N-1$. For future use, we define $(T_1,\mu_1):=(T,\mu)$ and denote the $(T_1,\mu_1)$-connectivity graph with $G_1(t)=G(t)$. Observe that each of the $G_1(nT)$ graphs belongs to the finite set of simple graphs with $N$ nodes. Notice that the cardinality of the set of simple graphs with $N$ nodes is bounded by  $2^{N(N-1)}$, indeed, they can be identified with a $N\times N$ matrix with $a_{ij}=1$ if there is an edge $i\to j$, and $a_{ij}=0$ otherwise. Notice that it always holds $a_{ii}=0$, then only $N(N-1)$ values have to be chosen in the set $\{0,1\}$. We denote simple graphs with $H^1,\ldots, H^{2^{N(N-1)}}$ for simplicity. We have two cases:
\begin{enumerate}
\item there exists a $m\in \{0,\ldots, 2^{N(N-1)}-1\}$ such that for all $r\in\{0,\ldots,N-2\}$ it holds $G_1((m(N-1)+r)T_1)=H^1$, i.e. there exists a sequence of $N-1$ consecutive $(T_1,\mu_1)$-connectivity graphs $G_1(nT_1)$ that coincide (and the value is $H^1$). One can then prove the estimate~\eqref{est-ISC} as follows:
\begin{itemize}
\item on the time interval $[0,m(N-1)T_1]$ one recalls that the support is contractive, hence 
$$\diam{m(N-1)T}\leq \diam{0};$$
\item on the time interval $$[m(N-1)T_1,(m+1)(N-1)T_1]$$ one applies Corollary~\ref{c-main}, with $G^*=H^1$ that has length $d^*\leq N-1$ and gets
\begin{eqnarray*}
\diam{(m+1)(N-1)T_1}\leq C_1\diam{m(N-1)T_1},
\end{eqnarray*}
with $C_1,\delta_1$ associated to $(T_1,\mu_1)$, i.e.  
\begin{equation*}
\begin{aligned}
C_1 &= 1-\tfrac12\left( \tfrac{\underline{m}\,\mu_1 T_1}{N+\underline{m}\,\mu_1 T_1}    \right)^{N-1}\exp(-2\tfrac{(N-1)^2}{N}T_1 \underline{m}),
\qquad
\delta_1 &= -\frac{\log(C_1)}{(N-1)T_1\, \underline{m}},
\end{aligned}
\end{equation*}
and  $d^*=N-1$. We then have
\begin{eqnarray*}
&&\diam{(m+1)(N-1)T_1}\leq \exp(-\delta_1 (N-1)T_1)\, \diam{m(N-1)T_1}.
\end{eqnarray*}

\item on the time interval $$[(m+1)(N-1)T_1,(N-1)2^NT]$$ one again uses the fact that the support is contractive, thus 
\begin{eqnarray}
&&\diam{(N-1)2^NT}\leq \diam{(m+1)(N-1)T_1}\leq C_1 \diam{0}.\label{e-suppISC}
\end{eqnarray}
Remark that the final time does not depend on $T_1$.
\end{itemize}
\item the previous condition does not hold, i.e. for all $m\in \{0,\ldots, 2^{N(N-1)}-1\}$ there exists $r\in\{0,\ldots,N-2\}$ such that $G_1((m(N-1)+r)T_1)\in \{H^2,\ldots,H^{2^{N(N-1)}}\}$. We now replace $T_1$ with $T_2:=(N-1)T_1$ and $\mu_2:=\frac{\mu_1}{N-1}$. We now consider the associated $(T_2,\mu_2)$-connectivity graphs, that we denote with $G_2(t)$. The previous condition now ensures that for all $m\in \{0,\ldots, 2^{N(N-1)}-1\}$ it holds $G_2(m(N-1)T_2)\in \{H^2,\ldots,H^{2^{N(N-1)}}\}$. Observe that we have removed one element from the set of possible graphs. We can then iterate the process:
\begin{itemize}
\item either there exists a $m\in \{0,\ldots, 2^{N(N-1)}-1\}$ such that for all $r\in\{0,\ldots,N-2\}$ it holds 
$$G_2((m(N-1)+r)T_2)=H^2.$$ Then, the results of step 1, in particular estimate \eqref{e-suppISC}, apply with parameters $(T_2,\mu_2)$ and corresponding $C_2$;
\item or we replace $(T_2,\mu_2)$ with $(T_3,\mu_3)$, where $T_3:=(N-1)T_2$ and $\mu_3:=\frac{\mu_2}{N-1}$. We denote the associated connectivity graph with $G_3$ and observe that for all $m\in \{0,\ldots, 2^{N(N-1)}-1\}$ it holds 
$$G_3((m(N-1)+r)T_3)\in \{H^3,\ldots,H^{2^{N(N-1)}}\}.$$ We iterate again.
\end{itemize}
\end{enumerate}
Clearly, Case 2 occurs for no more than $2^{N(N-1)}-1$ times, as for each step one of the $H^k$ is removed. Then, Case 1 certainly occurs for some $(T_k,\mu_k)$ and corresponding $C_k,\delta_k$, with $k\in\{1,\ldots,2^{N(N-1)}\}$. It then holds
\begin{eqnarray*}
\diam{(N-1)2^NT}\leq  C_k\diam{0}
\end{eqnarray*}
for some $k$.  By observing that $\mu_kT_k=\mu T$, we have\\
\begin{eqnarray*}
C_k&=&1-\tfrac12\left( \tfrac{\underline{m}\mu T}{N+\underline{m}\mu T}    \right)^{N-1}\exp(-2\tfrac{(N-1)^2}{N}T_k\,\underline{m})\leq C
\end{eqnarray*}
with
\begin{eqnarray*}
C&=&1-\tfrac12\left( \tfrac{\underline{m}\mu T}{N+\underline{m}\mu T}    \right)^{N-1}\exp(-2\tfrac{(N-1)^{(2^{N(N-1)}+1)}}{N}T\,\underline{m}).
\end{eqnarray*}
By induction, this proves the result. \end{proof}

\subsection{Flocking}
In this section, we prove Theorem \ref{t-main2}. Also in this case, we prove it by providing an explicit rate of convergence for the velocity variables.

We first recall some properties of \eqref{e-ODE-2nd}, that are slight generalizations of the properties of the original cooperative second-order systems. With this goal, we recall the definition the diameters in the position and velocity variables, respectively:
\begin{eqnarray}
&&\D_X(t):=\max_{ij}|x_i(t)-x_j(t)|,\label{e-DX}\\
&&\D_V(t):=\max_{ij}|v_i(t)-v_j(t)|.\label{e-DV}
\end{eqnarray}
We remark the following fundamental property: the dynamics of the velocity variables $v_i$ in \eqref{e-ODE-2nd} is in reality of the form \eqref{e-ODE}, with the exception of the fact that the $\phi_{ij}$ do not depend on the $v_i$ variables, but on the $x_i$. Nevertheless, we have that results of Sections \ref{s-contractive} and \ref{s-reduction} apply, with no relevant differences in the proofs.

\begin{proposition} \label{p-2nd} Assume that hypotheses (H1)-(H2) in Corollary \ref{c-main} hold. Then, solutions of \eqref{e-ODE-2nd} satisfy the following properties:
\begin{enumerate}
\item solutions in the Carathéodory sense exist, globally in time, and are unique;
\item the support in the $v_i$ variables is contractive, i.e. 
$$\mathrm{supp}_V((x,v)(t)):=\mathrm{conv}(\{v_i(t)\})$$
satisfies $\mathrm{supp}_V((x,v)(t))\supseteq \mathrm{supp}_V((x,v)(s))$ for $0\leq t\leq s$;
\item the diameter in the velocity variable $\D_V$ is a nonincreasing function;
\item for all $t_0,t\geq 0$ it holds
\begin{equation}\label{e-DX-dyn}
\D_X(t_0+t)\leq \D_X(t_0)+t\D_V(t_0).
\end{equation}
\end{enumerate}
\end{proposition}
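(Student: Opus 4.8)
The plan is to prove Proposition~\ref{p-2nd} by observing that the velocity dynamics in~\eqref{e-ODE-2nd}, read as an equation for $v=(v_1,\dots,v_N)$ alone, has exactly the structure of~\eqref{e-ODE}: setting $\widetilde{M}_{ij}(t):=M_{ij}(t)\,\phi(|x_i(t)-x_j(t)|)$, the equation becomes $\dot v_i=\frac1N\sum_j \widetilde{M}_{ij}(t)(v_j-v_i)$. Although $\widetilde{M}_{ij}$ now depends on the position trajectory, which itself is coupled to $v$, this coupling is irrelevant for the four claimed properties because they only use \emph{nonnegativity} and local integrability of the coefficients along the given solution — not any structural feature of how the coefficients are generated. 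So the strategy is: first establish well-posedness of the full $(x,v)$ system, then freeze the positions and invoke the first-order results of Sections~\ref{s-contractive}--\ref{s-reduction} for the $v$-equation, and finally handle the elementary position estimate~\eqref{e-DX-dyn} directly.

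More concretely, for item (1) I would note that under (H1)-(H2'), the right-hand side of~\eqref{e-ODE-2nd} is Carathéodory (measurable in $t$, continuous in $(x,v)$) and locally Lipschitz in $(x,v)$ on bounded sets: $\phi$ is Lipschitz, the $M_{ij}$ are bounded by $1$, and the map $(x,v)\mapsto \frac1N\sum_j M_{ij}(t)\phi(|x_i-x_j|)(v_j-v_i)$ is a product/composition of Lipschitz-on-bounded-sets maps. This gives local existence and uniqueness (Filippov/Carathéodory theory, \cite{filippov}); global existence follows from an a priori bound — $\D_V$ is nonincreasing (proved next, but it only uses nonnegativity of the coefficients, so there is no circularity), hence each $|v_i(t)|$ stays bounded by $\max_k|v_k(0)|+\frac1N\sum\ldots$; more simply, $|v_i(t)-v_j(t)|\le \D_V(0)$ bounds the velocity spread, and then $|x_i(t)|\le |x_i(0)|+t\,\big(\max_k|v_k(0)|\big)$ grows at most linearly, so solutions cannot blow up in finite time. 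For items (2) and (3): along the fixed solution, the coefficients $\widetilde{M}_{ij}(t)=M_{ij}(t)\phi(|x_i(t)-x_j(t)|)$ are $\mathscr{L}^1$-measurable, nonnegative (indeed strictly positive since $\phi>0$), and bounded (by $\phi(0)$ since $\phi$ is nonincreasing, or just by $\max_{r\le R}\phi(r)$ on the relevant bounded range). Therefore the $v$-trajectory is a Carathéodory solution of a first-order system of the form~\eqref{e-ODE} with admissible coefficients, and Proposition~\ref{p-contractive} applies verbatim: $\mathrm{conv}(\{v_i(t)\})$ is weakly contractive and $\D_V$ is nonincreasing. I would just remark that the proof of Proposition~\ref{p-contractive} is pointwise in $t$ and never uses that the coefficients are ``autonomous'' in any sense.

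Item (4) is a direct estimate not needing the contractivity machinery: for indices $i,j$ and $t\ge 0$, $\frac{d}{dt}(x_i-x_j)=v_i-v_j$, so $|x_i(t_0+t)-x_j(t_0+t)|\le |x_i(t_0)-x_j(t_0)|+\int_{t_0}^{t_0+t}|v_i(s)-v_j(s)|\,ds\le \D_X(t_0)+\int_{t_0}^{t_0+t}\D_V(s)\,ds\le \D_X(t_0)+t\,\D_V(t_0)$, where the last step uses that $\D_V$ is nonincreasing (item (3)), hence $\D_V(s)\le \D_V(t_0)$ for $s\ge t_0$. Taking the max over $i,j$ on the left gives~\eqref{e-DX-dyn}. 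The main ``obstacle'' is really just the bookkeeping of the reduction: one must be careful that the first-order results are stated for coefficients $M_{ij}:[0,+\infty)\to[0,1]$ whereas here $\widetilde M_{ij}$ takes values in $[0,\phi(0)]$ — but this is exactly the situation already accommodated by Proposition~\ref{p:non-lin} (coefficients valued in $[0,\overline m]$), so no genuine difficulty arises; the only thing to state clearly is that \emph{all} the first-order arguments used here (weak contractivity of the convex hull, monotonicity of extremes) depend solely on sign and integrability of the coefficients, which hold along any fixed solution of~\eqref{e-ODE-2nd}.
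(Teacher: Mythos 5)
Your proof is correct and follows essentially the same route as the paper: the authors likewise reduce items (1)–(3) to the first-order machinery of Sections \ref{s-contractive}--\ref{s-reduction} by viewing the velocity equation as \eqref{e-ODE} with coefficients $M_{ij}(t)\phi(|x_i(t)-x_j(t)|)$, and prove item (4) by the same direct estimate (they differentiate $|x_i-x_j|^2$ and use monotonicity of $\D_V$, whereas you integrate $\dot x_i-\dot x_j=v_i-v_j$ directly — an equivalent, if anything slightly cleaner, computation). Your explicit remarks on the absence of circularity in the a priori bound and on the coefficients taking values in $[0,\phi(0)]$ rather than $[0,1]$ are exactly the bookkeeping the paper leaves implicit.
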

\begin{proof} Statements 1-2-3 are proved as in Section \ref{s-contractive}. We now prove Statement 4: let $t_0,t\geq 0$ be given and $i,j$ be a pair of indexes that satisfy $|x_i(t_0+t)-x_j(t_0+t)|=\D_X(t_0+t)$. Assume that $s\in[t_0,t_0+t]$ is a point of differentiability of~$x_i, x_j$, where $\D_X(s)=|x_i(s)-x_j(s)|$, and compute
\begin{eqnarray*}
\frac{d}{ds} (x_i(s)- x_j(s))^2&=&2 (x_i(s)-x_j(s))\cdot (v_i(s)-v_j(s))\leq 2\D_X(s)\D_V(s)\leq 2\D_X(s)\D_V(t_0),
\end{eqnarray*}
by using Statement 3. One now gets $\frac{d}{ds} D_X(s)\leq \D_V(t_0)$, from which the result follows.
\end{proof}

We are now ready to prove Theorem \ref{t-main2}. 

\begin{proof} We first aim to apply Corollary \ref{c-main} to the system of the $v_i$ variables. With this goal, we first recall that the interaction function $\lambda_i(x)\phi_{ij}(x_i,x_j)=1\cdot \phi(|x_i-x_j|)$ has a maximum $\overline{m}=\phi(0)$. We then define $\tau=d^* T/\overline{m}=d^* T/\phi(0)$. By \eqref{e-DX-dyn}, for all $t\in[0,(n+1)\tau]$ it holds
$$\D_X(t)\leq \D_X(0)+(n+1)\tau \D_V(0).$$
On such time interval $[0,\tau]$, the interaction function $\lambda_i(x)\phi_{ij}(x_i,x_j)$ is then bounded from below by
$$\underline{m}(n\tau):=1\cdot\phi(\D_X(0)+(n+1)\tau \D_V(0)).$$
Then, estimate \eqref{e-stimaV} is just the rewriting of \eqref{e-stimadiamThm} on the time interval $[n\tau,(n+1)\tau]$. 

We now prove the sufficient condition for alignment. We first rewrite \eqref{e-stimaV} as follows, by simply using $\log(1+x)\leq x$:
\begin{eqnarray}
&&\log\left(\frac{\D_V((n+1)\tau)}{\D_V(n\tau)}\right)\leq \log(C(n\tau))\leq - \left(\tfrac{\phi(\D_X(0)+(n+1)\tau \D_V(0))\mu T}{N+\phi(\D_X(0)+(n+1)\tau \D_V(0))\mu T}    \right)^{d^*}\exp(-2\tfrac{N-1}{N}d^*T\phi(0)),\nonumber\\
&&\leq - \phi(\D_X(0)+(n+1)\tau \D_V(0))^{d^*} K,\label{e-V2}
\end{eqnarray}
with $$K:=\left(\tfrac{\mu T}{N+\phi(0)\mu T}\right)^{d^*}\exp(-2\tfrac{N-1}{N}d^*T\phi(0)).$$

We now iteratively apply \eqref{e-V2} and find 
\begin{eqnarray*}
\log\left(\frac{\D_V(n\tau)}{\D_V(0)}\right)&\leq& -\sum_{j=1}^n \phi(\D_X(0)+j\tau \D_V(0))^{d^*} K\leq -K\int_{\tau\D_V(0)}^{(n+1)\tau\D_V(0)}\phi(\D_X(0)+r)^{d^*}\,dr,
\end{eqnarray*}where we estimated the series from above by the integral, due to the fact that $\phi$ is nonincreasing. If $\phi^{d^*}$ is nonintegrable at $+\infty$, then $\lim_{n\to+\infty}\log\left(\frac{\D_V(n\tau)}{\D_V(0)}\right)=-\infty$, hence $\lim_{n\to+\infty}\D_V(n\tau)=0$. By contractivity of the support in the velocity variable, this implies $\lim_{t\to+\infty}\D_V(t)=0$, i.e. alignment occurs.

We now aim to prove the sufficient condition for flocking. Observe that, by iteratively applying Proposition \ref{p-2nd}, Statement 4, one has
\begin{eqnarray*}
&&\D_X((n+1)\tau)\leq \D_X(0)+\tau \sum_{j=0}^n \D_V(j\tau)\leq   \D_X(0)+\tau \D_V(0)\left(1+ \sum_{j=1}^n \prod_{k=1}^j C(k\tau)\right).
\end{eqnarray*}
We then find a sufficient condition for the convergence of the series, that implies uniform boundedness of $\D_X(t)$, i.e. flocking. By Raabe's condition, one has that the series converges if
\begin{eqnarray*}
\lim_{n\to+\infty} n\left(\frac{\prod_{k=1}^n C(k\tau)}{\prod_{k=1}^{n+1} C(k\tau)}-1\right)> 1.
\end{eqnarray*}
A direct computation shows that this corresponds to
\begin{eqnarray}
\label{Raabe-cond}
\lim_{n\to+\infty} n \left(\frac{1}{C((n+1)\tau )}-1\right)>1. \label{e-raabe}
\end{eqnarray}
We now recall that $\phi$ is nonincreasing and positive, thus $L:=\lim_{r\to+\infty} \phi(r)$ exists and it satisfies $L\geq 0$. We have two cases:
\begin{itemize}
\item if $L>0$, then $\lim_{n\to+\infty} C(n\tau )<1$ and condition \eqref{e-raabe} is satisfied
\item if $L=0$, then Raabe's condition becomes
\begin{equation}
\label{e-raabe-L0}
\lim_{n\to+\infty} n  \left( \tfrac{\phi(n\tau \D_V(0))\mu T}{N}    \right)^{d^*}\exp(-2\tfrac{N-1}{N}d^*T\phi(0))>1,
\end{equation}
that is the required condition.
\end{itemize}
\end{proof}

\section{Examples and comparison with the literature}

\label{s-examples}

In this section, we provide some examples that show the interest of our result, and in particular sharpness of the rate of convergence. We also compare our results with other convergence conditions available in the literature.

We first consider first-order systems: we observe that our convergence result is sharp when the dynamics is linear, i.e. for $\underline{m}=\overline{m}$. For simplicity, we choose $\underline{m}=\overline{m}=1$ i.e. we focus on systems of the form \eqref{e-ODEmain}, that are
\begin{equation}
\dot x_i(t)=\frac{1}{N}\sum_{j=1}^N M_{ij}(t) (x_j-x_i)\mbox{~~with~~}M_{ij}:[0,+\infty)\mapsto [0,1].
 \end{equation}
 
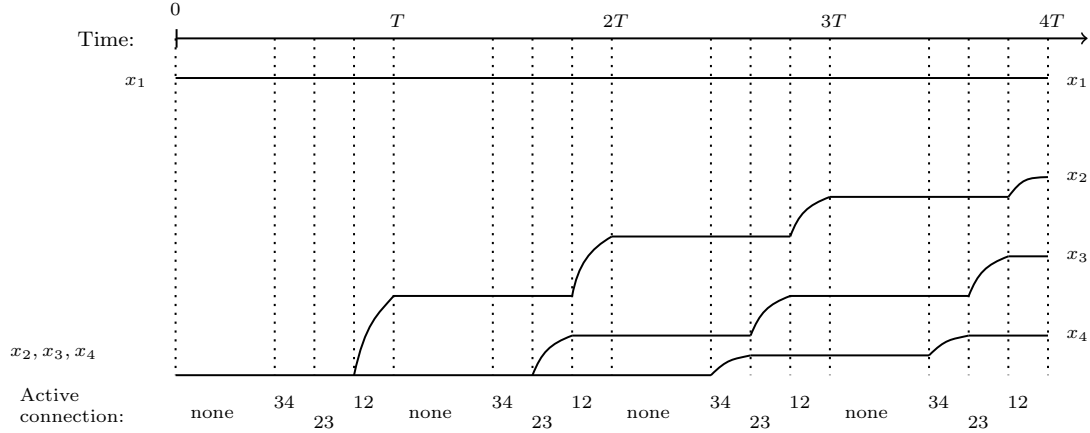
\begin{figure}[htb]

\tikzset{every picture/.style={line width=0.75pt},every node/.style={font=\scriptsize}} 

\begin{tikzpicture}[x=0.75pt,y=0.75pt,yscale=-1,xscale=1]

\draw[|-> ]    (90,10) -- (550,10) ;
\draw (55,10) node {\footnotesize Time:};
\draw (90,-5) node    {$0$};
\draw (202,1) node    {$T$};
\draw (312,1) node    {$2T$};
\draw (422,1) node    {$3T$};
\draw (532,1) node    {$4T$};

\draw    (90,180) -- (360,180) ;
\draw  [dash pattern={on 0.84pt off 2.51pt}]  (90,10) -- (90,180) ;
\draw  [dash pattern={on 0.84pt off 2.51pt}]  (140,10) -- (140,180) ;
\draw  [dash pattern={on 0.84pt off 2.51pt}]  (160,10) -- (160,180) ;
\draw  [dash pattern={on 0.84pt off 2.51pt}]  (180,10) -- (180,180) ;
\draw  [dash pattern={on 0.84pt off 2.51pt}]  (200,10) -- (200,180) ;
\draw  [dash pattern={on 0.84pt off 2.51pt}]  (250,10) -- (250,180) ;
\draw  [dash pattern={on 0.84pt off 2.51pt}]  (270,10) -- (270,180) ;
\draw  [dash pattern={on 0.84pt off 2.51pt}]  (290,10) -- (290,180) ;
\draw  [dash pattern={on 0.84pt off 2.51pt}]  (310,10) -- (310,180) ;
\draw  [dash pattern={on 0.84pt off 2.51pt}]  (360,10) -- (360,180) ;
\draw  [dash pattern={on 0.84pt off 2.51pt}]  (380,10) -- (380,180) ;
\draw  [dash pattern={on 0.84pt off 2.51pt}]  (400,10) -- (400,180) ;
\draw  [dash pattern={on 0.84pt off 2.51pt}]  (420,10) -- (420,180) ;
\draw  [dash pattern={on 0.84pt off 2.51pt}]  (470,10) -- (470,180) ;
\draw  [dash pattern={on 0.84pt off 2.51pt}]  (490,10) -- (490,180) ;
\draw  [dash pattern={on 0.84pt off 2.51pt}]  (510,10) -- (510,180) ;
\draw  [dash pattern={on 0.84pt off 2.51pt}]  (530,10) -- (530,180) ;
\draw    (90,30) -- (530,30) ;
\draw    (180,180) .. controls (185.67,153.5) and (192,149.17) .. (200,140) ;
\draw    (200,140) -- (290,140) ;
\draw    (290,140) .. controls (293,124) and (299,116.33) .. (310,110) ;
\draw    (310,110) -- (400,110) ;
\draw    (400,110) .. controls (404.33,98.33) and (408.67,94.33) .. (420,90) ;
\draw    (510,90) .. controls (517,80.67) and (519.33,80.33) .. (530,80) ;
\draw    (420,90) -- (510,90) ;
\draw    (270,180) .. controls (274.67,168.33) and (278.67,164.33) .. (290,160) ;
\draw    (290,160) -- (380,160) ;
\draw    (380,160) .. controls (384.67,148.33) and (388.67,144.33) .. (400,140) ;
\draw    (400,140) -- (490,140) ;
\draw    (490,140) .. controls (494.67,128.33) and (498.67,124.33) .. (510,120) ;
\draw    (380,170) .. controls (371,172) and (368.33,171.67) .. (360,180) ;
\draw    (380,170) -- (470,170) ;
\draw    (490,160) .. controls (481,162) and (478.33,161.67) .. (470,170) ;
\draw    (510,120) -- (530,120) ;
\draw    (490,160) -- (530,160) ;

\draw (545,32) node    {$x_{1}$};
\draw (10,185) node [anchor=north west][inner sep=0.75pt]   [align=left] { Active\\ connection:};
\draw (141-3,185+4) node [anchor=north west][inner sep=0.75pt]    {$34$};
\draw (161-3,195+4) node [anchor=north west][inner sep=0.75pt]    {$23$};
\draw (545,80) node {$x_{2}$};
\draw (181-3,185+4) node [anchor=north west][inner sep=0.75pt]    {$12$};
\draw (101-5,192+4) node [anchor=north west][inner sep=0.75pt]   [align=left] {none};
\draw (251-3,185+4) node [anchor=north west][inner sep=0.75pt]    {$34$};
\draw (271-3,195+4) node [anchor=north west][inner sep=0.75pt]    {$23$};
\draw (291-3,185+4) node [anchor=north west][inner sep=0.75pt]    {$12$};
\draw (211-5,192+4) node [anchor=north west][inner sep=0.75pt]   [align=left] {none};
\draw (361-3,185+4) node [anchor=north west][inner sep=0.75pt]    {$34$};
\draw (381-3,195+4) node [anchor=north west][inner sep=0.75pt]    {$23$};
\draw (401-3,185+4) node [anchor=north west][inner sep=0.75pt]    {$12$};
\draw (321-5,192+4) node [anchor=north west][inner sep=0.75pt]   [align=left] {none};
\draw (471-3,185+4) node [anchor=north west][inner sep=0.75pt]    {$34$};
\draw (491-3,195+4) node [anchor=north west][inner sep=0.75pt]    {$23$};
\draw (511-3,185+4) node [anchor=north west][inner sep=0.75pt]    {$12$};
\draw (431-5,192+4) node [anchor=north west][inner sep=0.75pt]   [align=left] {none};
\draw (545,120) node {$x_{3}$};
\draw (545,160) node {$x_{4}$};
\draw (70,32) node    {$x_{1}$};
\draw (5,165) node [anchor=north west][inner sep=0.75pt]   [align=right] {$x_{2} ,x_{3} ,x_{4}$};

\end{tikzpicture}
\caption{Example \ref{ex-1} with $N=4$.}
\end{figure}

\begin{example}[First-order system]\label{ex-1}We now fix a system of $N$ agents and two parameters $T,\mu>0$; for simplicity of description, we assume $\mu\leq \frac1{N-1}$ from now on, so that no more than one connection is active at each time. We aim to describe the ``worst case'' scenario for our estimate in Theorem \ref{t-main}.  It occurs when the connectivity graph is reduced to a single path and when interactions $M_{ij}(t)$ induce a very slow dynamics of agents. For this reason, we define the following interactions:
\begin{itemize}
\item it holds $M_{ij}\equiv 0$ for all $i,j\in\{1,\ldots,N\}$ except for $j=i-1$;
\item the $M_{i,i-1}$ functions are $T$-periodic, defined on $[0,T]$ by
\begin{equation}M_{i,i-1}(t):=\begin{cases}
1&\mbox{~for $t\in [T-i\mu-1,T-(i-1)\mu]$.}\\
0&\mbox{~elsewhere.}
\end{cases}\label{e-Mex}
\end{equation}
\end{itemize}
The resulting interaction graph is then the following:
$$1\leftarrow 2\leftarrow 3\leftarrow \ldots\leftarrow N.$$

We consider the following initial configuration: $x_1(0)=0$, and $x_j(0)=1$ for all $j\in\{2,\ldots,N\}$. The dynamics can be explicitly computed. We define $T_k:=T-k\mu$ and compute:
\begin{itemize}
\item $x_1(t)=0$ for all $t\geq 0$
\item for all $k\in\mathbb{N}$ it holds $x_2(kT+\tau)=$
$$\begin{cases}
x_2(kT)=\exp(-k\mu)&\mbox{ for $\tau\in[0,T_{1}]$,}\\
\exp(-(\tau-T_{1}))x_2(kT)&\mbox{ for $\tau\in[T_{1},T]$.}
\end{cases}$$

\item $x_3(\tau)=1$ for $\tau\in[0,T]$, while for $k\in\mathbb{N}\setminus\{0\}$ it holds $x_3(kT)=k\exp(-(k-1)\mu)-(k-1)\exp(-k\mu)$ as well as
$x_3(kT+\tau)=$
$$\hspace{-5mm}\begin{cases}
x_3(kT)&\mbox{ for $\tau\in[0,T_{2}]$,}\\
x_2(kT)+e^{-(\tau-T_{2})}(x_3(kT)-x_2(kT))&\mbox{for $\tau\in[T_{2},T_{1}]$,}\\
x_3((k+1)T)&\mbox{ for $\tau\in[T_{1},T]$.}
\end{cases}$$

\item for $j\in\mathbb{N}\setminus\{0,1,2\}$ we recursively have $x_j(\tau)=1$ for $\tau\in[0,(j-2)T]$ and for $k\in\mathbb{N}\setminus[0,(j-2))$ the following recursive formula holds:
\begin{equation}
x_j((k+1)T)=x_{j-1}(kT)+\exp(-\mu)(x_j(kT)-x_{j-1}(kT)).
\label{e-xjkT}
\end{equation}
Moreover, it always holds
$x_j(kT+\tau)=$
$$\hspace{-7mm}\begin{cases}
x_j(kT)&\mbox{ for $\tau\in[0,T_{j-1}]$,}\\
\begin{array}{l}x_{j-1}(kT)+\\
e^{-(\tau-T_{j-1})}(x_j(kT)-x_{j-1}(kT))\end{array}&\mbox{for $\tau\in[T_{j-1},T_{j-2}]$,}\\
x_j((k+1)T)&\mbox{ for $\tau\in[T_{j-2},T]$.}
\end{cases}$$
\end{itemize}

For a given $N$, this example shows the following key phenomenon: the graph length is $d^*=N-1$ and the diameter $\D(t)=\max_{i,j}|x_i(t)-x_j(t)|$ coincides with $|x_N(t)-x_1(t)|=x_N(t)$. In particular, the explicit computation of the trajectory shows that it holds $x_N((d^*-1)T)=x_N((N-2)T)=1=x_N(0)$, i.e. that there is no reduction of the diameter in the first $(d^*-1)$ periods of time.

While we found no closed form for the $x_j(kT)$, we can numerically investigate the discrepancy between the evolution of the diameter in this example and the estimate given by Theorem \ref{t-main}. In particular, in the case of $N$ particles, one has $\D(0)=1$ and $\D(d^*T)=x_N((N-1)T)$. We then compare the value of $\D(d^*T)$ with the value of $C\D(0)=C$ given by \eqref{e-C}. We choose $T=N$, $\mu=\frac{1}{N-1}$. Since all values are very close to 1, in Table \ref{t-dim1} we compare  $1-\D(d^*T)$ with $1-C$. It is remarkable to observe that the estimate given by Theorem \ref{t-main} becomes less and less accurate when $N$ grows. This is due to the fact that the estimate is based on the idea that all agents might interact at the end of the intervals, while in reality interaction occurs earlier and produces a stronger reduction of the support. This phenomenon has a larger impact for large $N$.

\begin{table}[htb]
\begin{centering}
\begin{tabular}{|c||c|c|c|c|c|c|}\hline
N & 3&4&5&6\\\hline
$1-\D(d^*T)$ &0.6035  & 0.3993&      0.2591  &   0.1666   \\\hline
$1-C\D(0)$ & 3.4D-7&   2.9D-13 & 3.3D-21&  5.6D-31    \\\hline
\end{tabular}

\vspace{2mm}

\begin{tabular}{|c||c|c|c|c|c|c|c|c|}\hline
N &7&8&9&10\\\hline
$1-\D(d^*T)$ &   0.1066&   0.0679 &   0.0432 &  0.0275\\\hline
$1-C\D(0)$ &1.4D-42 &  5.4D-56 & 3.3D-71&  3.3D-88\\\hline
\end{tabular}

\end{centering}

\vspace{2mm}

\caption{Example \ref{ex-1}: real trajectory vs estimate of Theorem \ref{t-main}.}
\label{t-dim1}
\end{table}

\end{example}

\begin{example}[Second-order system on the line]\label{ex-2} We now consider a system of $N$ particles on the real line, with dynamics given by the second-order system \ref{e-ODE-2nd}, i.e. the Cucker-Smale model with lack of connections, with given parameters $T,\mu>0$. We choose the original interaction function $\phi(r)=\frac{1}{(1+r^2)^{\beta}}$. Here, we do not aim to evaluate whether estimates about the diameters (in the space and/or velocity variables) are sharp. We are instead interested in showing that flocking is ensured only when the parameter $\beta$ in the definition of $\phi$ satisfy $\beta<1/(2d^*)$, i.e. when the function $r\cdot \phi(r)$ is not integrable at $+\infty$.

We choose the following interaction functions:
\begin{itemize}
\item it holds $M_{ij}\equiv 0$ for all $i,j\in\{1,\ldots,N\}$ with $j<i+1$;
\item it holds $M_{ij}\equiv 1$ for all $i,j\in\{2,\ldots,N\}$ with $j>i$; remark that the index $1$ is not included;
\item the $M_{i,i+1}$ functions are $T$-periodic, defined on $[0,T]$ by
\begin{equation}M_{i,i+1}(t):=\begin{cases}
1&\mbox{~for $t\in [T-i\mu-1,T-(i-1)\mu]$.}\\
0&\mbox{~elsewhere.}
\end{cases}\label{e-Mex2}
\end{equation}
\end{itemize}
 The corresponding interaction graph is given by Figure \ref{f-CSgraph}. It has length $d^*=N-1$: indeed, the only globally reachable node is 1 and the longest path to it is given by $N\to\ldots\to 1$, that has length $N-1$.

 \begin{figure}
\centering
\tikzset{every picture/.style={line width=0.75pt}} 

\begin{tikzpicture}[x=0.75pt,y=0.75pt,yscale=-1,xscale=1]

\draw    (110,50) -- (82,50) ;
\draw [shift={(80,50)}, rotate = 360] [color={rgb, 255:red, 0; green, 0; blue, 0 }  ][line width=0.75]    (10.93,-3.29) .. controls (6.95,-1.4) and (3.31,-0.3) .. (0,0) .. controls (3.31,0.3) and (6.95,1.4) .. (10.93,3.29)   ;
\draw    (148,50) -- (122,50) ;
\draw [shift={(120,50)}, rotate = 360] [color={rgb, 255:red, 0; green, 0; blue, 0 }  ][line width=0.75]    (10.93,-3.29) .. controls (6.95,-1.4) and (3.31,-0.3) .. (0,0) .. controls (3.31,0.3) and (6.95,1.4) .. (10.93,3.29)   ;
\draw [shift={(150,50)}, rotate = 180] [color={rgb, 255:red, 0; green, 0; blue, 0 }  ][line width=0.75]    (10.93,-3.29) .. controls (6.95,-1.4) and (3.31,-0.3) .. (0,0) .. controls (3.31,0.3) and (6.95,1.4) .. (10.93,3.29)   ;
\draw    (188,50) -- (162,50) ;
\draw [shift={(160,50)}, rotate = 360] [color={rgb, 255:red, 0; green, 0; blue, 0 }  ][line width=0.75]    (10.93,-3.29) .. controls (6.95,-1.4) and (3.31,-0.3) .. (0,0) .. controls (3.31,0.3) and (6.95,1.4) .. (10.93,3.29)   ;
\draw [shift={(190,50)}, rotate = 180] [color={rgb, 255:red, 0; green, 0; blue, 0 }  ][line width=0.75]    (10.93,-3.29) .. controls (6.95,-1.4) and (3.31,-0.3) .. (0,0) .. controls (3.31,0.3) and (6.95,1.4) .. (10.93,3.29)   ;
\draw    (228,50) -- (202,50) ;
\draw [shift={(200,50)}, rotate = 360] [color={rgb, 255:red, 0; green, 0; blue, 0 }  ][line width=0.75]    (10.93,-3.29) .. controls (6.95,-1.4) and (3.31,-0.3) .. (0,0) .. controls (3.31,0.3) and (6.95,1.4) .. (10.93,3.29)   ;
\draw [shift={(230,50)}, rotate = 180] [color={rgb, 255:red, 0; green, 0; blue, 0 }  ][line width=0.75]    (10.93,-3.29) .. controls (6.95,-1.4) and (3.31,-0.3) .. (0,0) .. controls (3.31,0.3) and (6.95,1.4) .. (10.93,3.29)   ;
\draw    (280,50) -- (252,50) ;
\draw [shift={(250,50)}, rotate = 360] [color={rgb, 255:red, 0; green, 0; blue, 0 }  ][line width=0.75]    (10.93,-3.29) .. controls (6.95,-1.4) and (3.31,-0.3) .. (0,0) .. controls (3.31,0.3) and (6.95,1.4) .. (10.93,3.29)   ;
\draw    (120,60) .. controls (155.46,71.82) and (150.17,71.03) .. (188.23,60.49) ;
\draw [shift={(190,60)}, rotate = 164.62] [color={rgb, 255:red, 0; green, 0; blue, 0 }  ][line width=0.75]    (10.93,-3.29) .. controls (6.95,-1.4) and (3.31,-0.3) .. (0,0) .. controls (3.31,0.3) and (6.95,1.4) .. (10.93,3.29)   ;
\draw    (120,60) .. controls (154.48,79.7) and (200.59,78.06) .. (238.28,60.8) ;
\draw [shift={(240,60)}, rotate = 154.65] [color={rgb, 255:red, 0; green, 0; blue, 0 }  ][line width=0.75]    (10.93,-3.29) .. controls (6.95,-1.4) and (3.31,-0.3) .. (0,0) .. controls (3.31,0.3) and (6.95,1.4) .. (10.93,3.29)   ;
\draw    (120,60) .. controls (157.43,83.64) and (239.49,78.17) .. (278.26,60.8) ;
\draw [shift={(280,60)}, rotate = 154.65] [color={rgb, 255:red, 0; green, 0; blue, 0 }  ][line width=0.75]    (10.93,-3.29) .. controls (6.95,-1.4) and (3.31,-0.3) .. (0,0) .. controls (3.31,0.3) and (6.95,1.4) .. (10.93,3.29)   ;
\draw    (160,40) .. controls (195.46,25.07) and (190.17,26.07) .. (228.23,39.38) ;
\draw [shift={(230,40)}, rotate = 199.16] [color={rgb, 255:red, 0; green, 0; blue, 0 }  ][line width=0.75]    (10.93,-3.29) .. controls (6.95,-1.4) and (3.31,-0.3) .. (0,0) .. controls (3.31,0.3) and (6.95,1.4) .. (10.93,3.29)   ;
\draw    (160,40) .. controls (194.48,15.11) and (240.59,17.19) .. (278.28,38.99) ;
\draw [shift={(280,40)}, rotate = 210.9] [color={rgb, 255:red, 0; green, 0; blue, 0 }  ][line width=0.75]    (10.93,-3.29) .. controls (6.95,-1.4) and (3.31,-0.3) .. (0,0) .. controls (3.31,0.3) and (6.95,1.4) .. (10.93,3.29)   ;

\draw (71,42.4) node [anchor=north west][inner sep=0.75pt]    {$1$};
\draw (111,42.4) node [anchor=north west][inner sep=0.75pt]    {$2$};
\draw (151,42.4) node [anchor=north west][inner sep=0.75pt]    {$3$};
\draw (191,42.4) node [anchor=north west][inner sep=0.75pt]    {$4$};
\draw (234,42.4) node [anchor=north west][inner sep=0.75pt]    {$...$};
\draw (281,42.4) node [anchor=north west][inner sep=0.75pt]    {$N$};

\end{tikzpicture}
\label{f-CSgraph}
\caption{Example \ref{ex-2}: interaction graph.}
\end{figure}
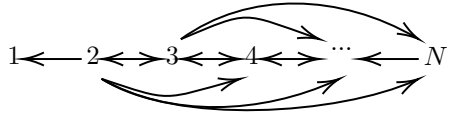

We choose the initial condition to be $x_1(0)=v_1(0)=0$ and $x_j(0)=v_j(0)=1$ for all $j\in\{2,\ldots,N\}$. One can easily prove that the order given by indexes is preserved by the dynamics in both the position and velocity variables, i.e. that $x_i(t)\leq x_j(t)$ and $v_i(t)\leq v_j(t)$ for all $t\geq 0$ and $i<j$. Moreover, the choice of $M_{ij}$ given above has the following consequences:
\begin{itemize}
\item it holds $x_1(t)=v_1(t)=0$ for all $t\geq 0$, since it holds $M_{1j}\equiv 0$ for all $j\in\{2,\ldots,N\}$;
\item for all indexes $j\in \{2,\ldots,N\}$, there is a weak interaction towards the index $j-1$ and strong constant interactions towards indexes $l<j$.
\end{itemize}
While we found no explicit formula for trajectories, we numerically compute them. We fix $\beta=1/10$ and study the cases of$N\in \{4,6,8\}$. See Figure \ref{f-ex2traj}. It is clear that one has flocking for $N=4$, while for $N=8$ there is no convergence in either variables. Both examples agree with the sufficient condition $\beta<\frac{1}{2d^*}$ for flocking, since $d^*=N-1$ in this case.  The most interesting case is given by $N=6$ (see Figure \ref{f-ex2traj} center), for which it holds $\beta=\frac{1}{2d^*}$: here, convergence of the velocity variables (bottom figure) is ensured, but boundedness of the position variables (top figure) is not verified.

\begin{figure}
\includegraphics[width=\linewidth]{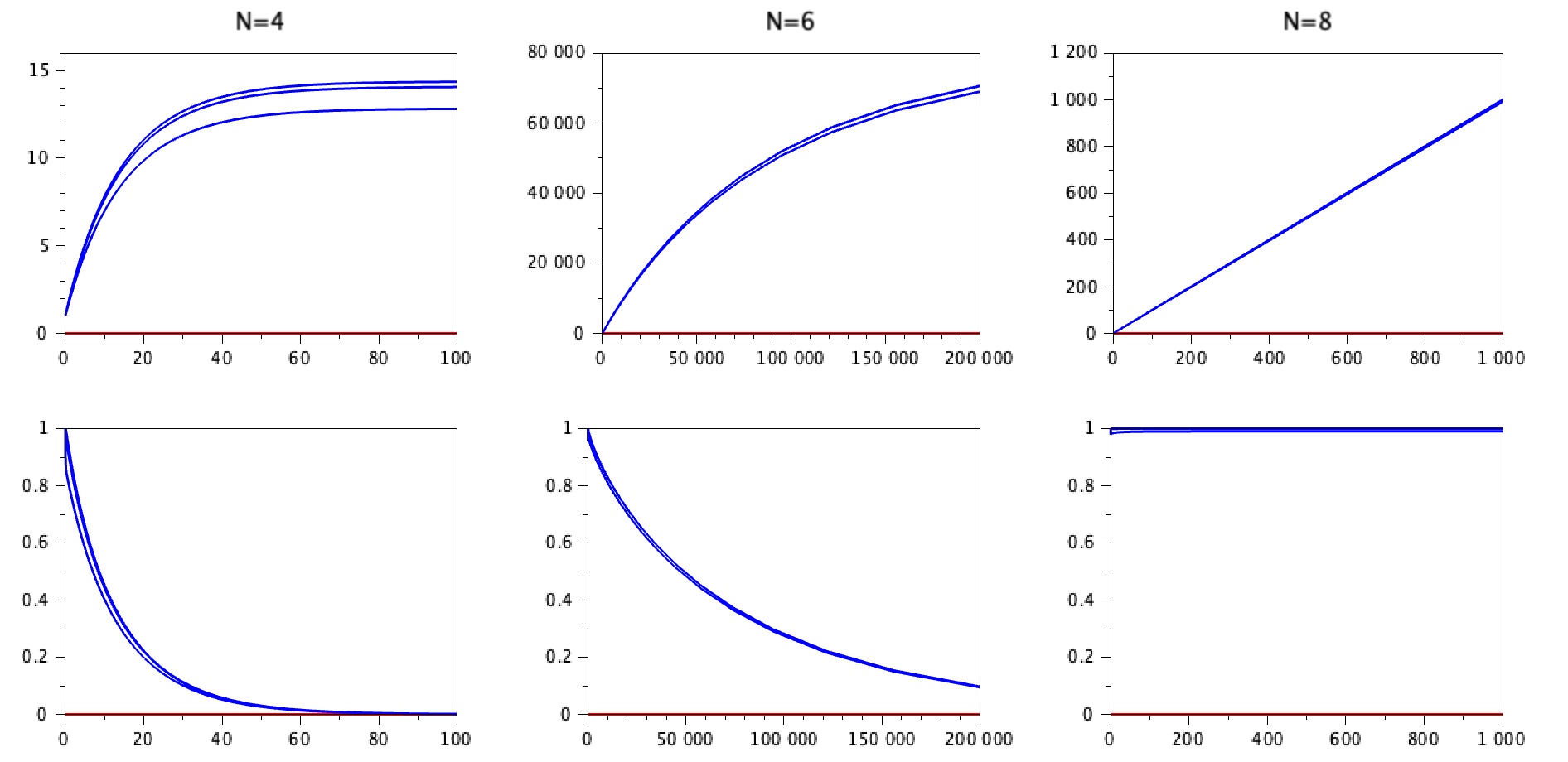}

\label{f-ex2traj}
\caption{Example \ref{ex-2}: trajectories of the system for $N\in \{4,6,8\}$ with position (top) and velocity (bottom)}
\end{figure}

\end{example}

\subsection{Comparison with the literature}
\label{s-comparison}

In this section, we provide some comparisons with available results in the literature. We basically have two quite different points of views: the symmetric and the non-symmetric ones.

In several key results, one imposes some form of symmetry of the communication weights. A strong form of symmetry, of the kind $M_{ij}=M_{ji}$, is explicitly imposed in several contributions, e.g. in \cite{chowdhury2018estimation} for first-order linear systems, in \cite{chowdhury2015consensus} for linear integrators, and in  \cite{bonnet2021consensus} for Cucker-Smale systems. A much weaker (thus more general) symmetry condition is known as the cut-balance condition, see \cite{cut-balance}: there exists $K>0$ such that for all subsets of agents $S\subset \{1,\ldots, N \}$ and for all $t > 0$ one requires
$$ \sum_{i\in S,j\not\in S} M_{ij}(t) \leq K \sum_{i\in S,j\not\in S} M_{ji}(t).$$ In \cite{arc-balance}, a generalization, known as the arc-balance condition, is introduced. In \cite{cut-balance2}, the result is extended to allow for non-instantaneous reciprocity.
Even though these results are fundamental, this symmetric point of view is somehow opposite to ours.

Indeed, the non-symmetric point of view (dating back to Moreau in \cite{moreau2004stability}) takes advantage of the direction of interactions: as the most explicit example, in his original contribution, Moreau builds a directed graph and requires a globally reachable node. This point of view, less discussed in the literature, can be encoded in the Integral Scrambling Condition discussed in Section \ref{s-ISC}. A more general condition of convergence, similar to the one studied here, is presented in \cite{caravenna}. There, a directed graph is built based on the following condition for the existence of the edge $i\to j$:
\begin{eqnarray*}&&\lim_{n\to +\infty} M_{ij}(t_n+t)= M_{ij}^*(t)\qquad \mbox{~with~} \int_t^{+\infty} M_{ij}^*(s)\,ds>0 \mbox{~for all $t>0$.}
\end{eqnarray*}

If the directed graph has a globally reachable node, then the first-order system converges to consensus. Under such a condition, no rate of convergence to consensus is available; indeed, one can introduce large intervals with no connection and still have the condition verified (by increasing the $t_n$).

\section{Conclusions}
\label{s-conclusions}

In this article, we have proved sufficient conditions for convergence of first-order systems to consensus and of second-order systems to flocking, under communication failures. We introduced quantitative estimates about the minimum level of service, namely the PE and ISC conditions. We proved that, for each of these conditions, consensus and flocking can be achieved under the classical conditions for systems with no communication failures. Yet, the rate of convergence is slower. In both consensus and flocking problems, we provided explicit rates of convergence.

In the future, we aim to find even weaker conditions for communication failures, to understand the (theoretical) minimum level of service ensuring consensus or flocking. Moreover, we aim to find explicit rates of convergence for other conditions, such as the algebraic connectivity (see \cite{bonnet2023consensus}).

\bibliographystyle{ieeetr}
\bibliography{refPE}

\begin{thebibliography}{10}

\bibitem{bullo2009distributed}
F.~Bullo, J.~Cort{\'e}s, and S.~Martinez, {\em Distributed control of robotic
  networks}.
\newblock Princeton University Press, 2009.

\bibitem{nedich2015convergence}
A.~Nedich, {\em Convergence rate of distributed averaging dynamics and
  optimization in networks}.
\newblock Now Publishers, Inc., 2015.

\bibitem{mesbahi2010graph}
M.~Mesbahi and M.~Egerstedt, {\em Graph theoretic methods in multiagent
  networks}.
\newblock Princeton University Press, 2010.

\bibitem{HK}
R.~Hegselmann and U.~Krause, ``Opinion dynamics and bounded confidence: models,
  analysis and simulation,'' {\em Journal of Artificial Societies and Social
  Simulation}, vol.~5, no.~3, p.~2, 2002.

\bibitem{rateHK}
S.~Mohajer and B.~Touri, ``On convergence rate of scalar hegselmann-krause
  dynamics,'' in {\em 2013 American control conference}, pp.~206--210, IEEE,
  2013.

\bibitem{rateHK2}
A.~Bhattacharyya, M.~Braverman, B.~Chazelle, and H.~L. Nguyen, ``On the
  convergence of the hegselmann-krause system,'' in {\em Proceedings of the 4th
  conference on Innovations in Theoretical Computer Science}, pp.~61--66, 2013.

\bibitem{CS}
F.~Cucker and S.~Smale, ``Emergent behavior in flocks,'' {\em IEEE Trans.
  Autom. Control}, vol.~52, no.~5, pp.~852--862, 2007.

\bibitem{watts1998collective}
D.~J. Watts and S.~H. Strogatz, ``Collective dynamics of
  ‘small-world’networks,'' {\em Nature}, vol.~393, no.~6684, pp.~440--442,
  1998.

\bibitem{olfati2007consensus}
R.~Olfati-Saber, J.~A. Fax, and R.~M. Murray, ``Consensus and cooperation in
  networked multi-agent systems,'' {\em Proceedings of the IEEE}, vol.~95,
  no.~1, pp.~215--233, 2007.

\bibitem{moreau2004stability}
L.~Moreau, ``Stability of continuous-time distributed consensus algorithms,''
  in {\em 43rd IEEE Conference on Decision and Control}, vol.~4,
  pp.~3998--4003, IEEE, 2004.

\bibitem{narendra2012stable}
K.~S. Narendra and A.~M. Annaswamy, {\em Stable adaptive systems}.
\newblock Courier Corporation, 2012.

\bibitem{ChSi2010}
Y.~Chitour and M.~Sigalotti, ``On the stabilization of persistently excited
  linear systems,'' {\em SIAM Journal on Control and Optimization}, vol.~48,
  no.~6, pp.~4032--4055, 2010.

\bibitem{ChSi2014}
Y.~Chitour, F.~Colonius, and M.~Sigalotti, ``Growth rates for persistently
  excited linear systems,'' {\em Mathematics of Control, Signals, and Systems},
  vol.~26, no.~4, pp.~589--616, 2014.

\bibitem{ren2008distributed}
W.~Ren and R.~W. Beard, {\em Distributed consensus in multi-vehicle cooperative
  control}.
\newblock Springer, 2008.

\bibitem{tang2020bearing}
Z.~Tang, R.~Cunha, T.~Hamel, and C.~Silvestre, ``Bearing leader-follower
  formation control under persistence of excitation,'' {\em IFAC-PapersOnLine},
  vol.~53, no.~2, pp.~5671--5676, 2020.

\bibitem{manfredi2016criterion}
S.~Manfredi and D.~Angeli, ``A criterion for exponential consensus of
  time-varying non-monotone nonlinear networks,'' {\em IEEE Trans. Autom.
  Control}, vol.~62, no.~5, pp.~2483--2489, 2016.

\bibitem{bonnet2021consensus}
B.~Bonnet and E.~Flayac, ``Consensus and flocking under communication failures
  for a class of {C}ucker--{S}male systems,'' {\em Systems \& Control Letters},
  vol.~152, p.~104930, 2021.

\bibitem{anderson2016convergence}
B.~D. Anderson, G.~Shi, and J.~Trumpf, ``Convergence and state reconstruction
  of time-varying multi-agent systems from complete observability theory,''
  {\em IEEE TAC}, vol.~62, no.~5, pp.~2519--2523, 2016.

\bibitem{chaillet2008uniform}
A.~Chaillet, Y.~Chitour, A.~Lor{\'\i}a, and M.~Sigalotti, ``Uniform
  stabilization for linear systems with persistency of excitation: the
  neutrally stable and the double integrator cases,'' {\em Mathematics of
  Control, Signals, and Systems}, vol.~20, no.~2, pp.~135--156, 2008.

\bibitem{bonnet2023consensus}
B.~Bonnet-Weill and M.~Sigalotti, ``Exponential consensus formation in
  time-varying multiagent systems via compactification methods,'' in {\em 2024
  IEEE 63rd Conference on Decision and Control (CDC)}, pp.~5409--5416, 2024.

\bibitem{bonnet2022consensus}
B.~Bonnet, N.~Pouradier~Duteil, and M.~Sigalotti, ``Consensus formation in
  first-order graphon models with time-varying topologies,'' {\em Mathe. Mode.
  Meth. Appl. Sci.}, vol.~32, no.~11, pp.~2121--2188, 2022.

\bibitem{filippov}
A.~F. Filippov, ``Differential equations with discontinuous right-hand side,''
  {\em Matematicheskii sbornik}, vol.~93, no.~1, pp.~99--128, 1960.

\bibitem{smith}
H.~L. Smith, {\em Monotone dynamical systems}.
\newblock AMS, 1995.

\bibitem{rainer2002opinion}
U.~Krause and R.~Hegselmann, ``Opinion dynamics and bounded confidence: models,
  analysis and simulation,'' {\em Journal of Artificial Societies and Social
  Simulation}, vol.~5, no.~3, p.~2, 2002.

\bibitem{degroot1974reaching}
M.~H. DeGroot, ``Reaching a consensus,'' {\em Journal of the American
  Statistical association}, vol.~69, no.~345, pp.~118--121, 1974.

\bibitem{motsch2011new}
S.~Motsch and E.~Tadmor, ``A new model for self-organized dynamics and its
  flocking behavior,'' {\em J. Stat. Phys.}, vol.~144, pp.~923--947, 2011.

\bibitem{chowdhury2018estimation}
N.~R. Chowdhury, S.~Sukumar, M.~Maghenem, and A.~Lor{\'\i}a, ``On the
  estimation of the consensus rate of convergence in graphs with persistent
  interconnections,'' {\em International Journal of Control}, vol.~91, no.~1,
  pp.~132--144, 2018.

\bibitem{chowdhury2015consensus}
N.~R. Chowdhury and S.~Sukumar, ``Consensus analysis of double integrator
  agents with persistent interaction graphs,'' in {\em 2015 5th Australian
  Control Conference (AUCC)}, pp.~120--125, IEEE, 2015.

\bibitem{cut-balance}
J.~M. Hendrickx and J.~N. Tsitsiklis, ``Convergence of type-symmetric and
  cut-balanced consensus seeking systems,'' {\em IEEE Transactions on Automatic
  Control}, vol.~58, no.~1, pp.~214--218, 2013.

\bibitem{arc-balance}
G.~Shi and K.~H. Johansson, ``The role of persistent graphs in the agreement
  seeking of social networks,'' {\em IEEE Journal on Selected Areas in
  Communications}, vol.~31, no.~9, pp.~595--606, 2013.

\bibitem{cut-balance2}
S.~Martin and J.~M. Hendrickx, ``Continuous-time consensus under
  non-instantaneous reciprocity,'' {\em IEEE Transactions on Automatic
  Control}, vol.~61, no.~9, pp.~2484--2495, 2015.

\bibitem{caravenna}
M.~Bentaibi, L.~Caravenna, J.-P.~A. Gauthier, and F.~Rossi, ``Consensus in
  multiagent systems under communication failure,'' 2025.

\end{thebibliography}
\end{document}